\providecommand{\U}[1]{\protect\rule{.1in}{.1in}}
\newtheorem{theorem}{Theorem}[section]
\theoremstyle{plain}
\newtheorem{corollary}{Corollary}[section]
\newtheorem{lemma}{Lemma}[section]
\newtheorem{remark}{Remark}
\numberwithin{equation}{section}
\def\vint{\mathop{\mathchoice%
          {\setbox0\hbox{$\displaystyle\intop$}\kern 0.22\wd0%
           \vcenter{\hrule width 0.6\wd0}\kern -0.82\wd0}%
          {\setbox0\hbox{$\textstyle\intop$}\kern 0.2\wd0%
           \vcenter{\hrule width 0.6\wd0}\kern -0.8\wd0}%
          {\setbox0\hbox{$\scriptstyle\intop$}\kern 0.2\wd0%
           \vcenter{\hrule width 0.6\wd0}\kern -0.8\wd0}%
          {\setbox0\hbox{$\scriptscriptstyle\intop$}\kern 0.2\wd0%
           \vcenter{\hrule width 0.6\wd0}\kern -0.8\wd0}}%
          \mathopen{}\int}
\begin{document}
\title[Stability of HLS inequality with lower bounds]{Stability of Hardy-Littlewood-Sobolev inequalities with  explicit lower bounds}
\author{Lu Chen}
\address[Lu Chen]{ School of Mathematics and Statistics, Beijing Institute of Technology, Beijing
100081, PR China}
\email{chenlu5818804@163.com}

\author{Guozhen Lu}
\address[Guozhen Lu]{Department of Mathematics, University of Connecticut, Storrs, CT 06269, USA}
\email{guozhen.lu@uconn.edu}

\author{Hanli Tang}
\address[Hanli Tang]{Laboratory of Mathematics and Complex Systems (Ministry of Education), School of Mathematical Sciences, Beijing Normal University, Beijing, 100875, China}
\email{hltang@bnu.edu.cn}
\keywords{}
\thanks{}


\begin{abstract}
In this paper, we establish the stability for the Hardy-Littlewood-Sobolev (HLS) inequalities with explicit lower bounds. By establishing the relation between the stability of HLS inequalities and the stability of fractional Sobolev inequalities, we also give the stability of the higher and fractional order Sobolev inequalities with the lower bounds. This extends to some extent the stability of the first order Sobolev inequalities with the explicit lower bounds established by Dolbeault, Esteban, Figalli, Frank and Loss in \cite{DEFFL} to the higher and  fractional order case. Our proofs are based on the competing symmetries, the continuous Steiner symmetrization inequality for the HLS integral and the dual stability theory. As another application of the stability of the HLS inequality, we also establish the stability of Beckner's \cite{Beckner1} restrictive Sobolev inequalities of fractional order $s$ with $0<s<\frac{n}{2}$ on the flat sub-manifold $\mathbb{R}^{n-1}$  and the sphere $\mathbb{S}^{n-1}$ with the explicit lower bound.
When $s=1$, this implies the  explicit lower bound for the stability of  Escobar's  first order Sobolev trace inequality  \cite{Escobar} which has remained unknown in the literature.
\end{abstract}

\maketitle
\section{Introduction}
The main purpose of this paper is to give the lower bound estimate of the sharp constants of the stability of the Hardy-Littlewood-Sobolev (HLS) inequality. As  applications, we establish the stability of the  higher and fractional Sobolev inequalities and Sobolev trace type inequalities with explicit lower bounds.
\medskip

The classical Hardy-Littlewood-Sobolev (HLS) inequality (\cite{HL,S}) states
\begin{equation}\label{HL1}
\int_{\mathbb{R}^n}\int_{\mathbb{R}^n}|x-y|^{-(n-2s)} f(x)g(y)dxdy\leq C_{n,p,q'}\|f\|_{L^{q'}(\mathbb{R}^n)}\|g\|_{L^p(\mathbb{R}^n)},
\end{equation}
with $1<q'$, $p<\infty$, $0<s<\frac{n}{2}$ and $\frac{1}{q'}+\frac{1}{p}-\frac{2s}{n}=1$. Lieb and Loss \cite{LiebLoss} applied the layer cake representation formula to give an explicit upper bound for the sharp constant $C_{n,p,q'}$. More precisely, they showed that the best constant
$C_{n,p,q'}$ satisfies the following estimate
$$C_{n,p,q'}\leq \frac{n}{n-\lambda}\Big(\frac{\pi^{\frac{\lambda}{2}}}{\Gamma(1+\frac{n}{2})}\Big)^{\frac{\lambda}{n}}\frac{1}{q'p}
\Big((\frac{\lambda q'}{n(q'-1)})^{\frac{\lambda}{n}}+(\frac{\lambda p}{n(p-1)})^{\frac{\lambda}{n}}\Big).$$
In the special diagonal case $q'=p=\frac{2n}{n+2}$ ($s=1$), Aubin \cite{Au} and Talenti \cite{Ta} derived the sharp constants of HLS inequality by classifying the extremal of classical Sobolev inequality which is a dual form of HLS inequality.
For the HLS inequality of general diagonal case, Lieb \cite{Lieb} classified the extremal function of HLS inequality and obtained the best constant
$$C_{n,p,q'}=\pi^{\frac{\lambda}{2}}\frac{\Gamma(\frac{n}{2}-
\frac{\lambda}{2})}{\Gamma(n-\frac{\lambda}{2})}\Big(\frac{\Gamma(n)}{\Gamma(\frac{n}{2})}\Big)^{1-\frac{\lambda}{n}}.$$
Recently, the authors of \cite{CCL, FL0, FL1, FL2} developed a rearrangement-free argument to obtain the sharp HLS inequality.
Weighted HLS inequalities, also known as the Stein-Weiss inequalities \cite{SteinWeiss},  and their sharp constants and existence of extremal functions have also been studied in Euclidean spaces and the Heisenberg group, see e.g., \cite{Be, CLT1, Han0, HLZ}.

\medskip

The stability of the fractional Sobolev inequality states that there exists some positive constant $C(n,s)$ such that for any $U\in \dot{H}^s(\mathbb{R}^n)$, there holds
\begin{equation*}
	\label{eq:sob}
	 S_{n,s}\left\|(-\Delta)^{s/2} U \right\|_2^2 -\| U\|_{\frac{2n}{n-2s}}^2 \geq C(n,s)\inf\limits_{h\in M_{s}}\|(-\Delta)^{s/2} (U-h)\|^2_{2},
\end{equation*}
where $\dot H^s(\mathbb{R}^n)$ denotes the $s$-order homogenous Sobolev space in $\mathbb{R}^n$: the completion of $C_{c}^{\infty}(\mathbb{R}^n)$ under the norm$\big(\int_{\mathbb{R}^n}|(-\Delta)^{\frac{s}{2}}U|^2dx\big)^{\frac{1}{2}}$,
$$M_s:=\left\{c(\frac{2b}{b^2+|x-a|^2})^{\frac{n-2s}{2}}, a\in \mathbb{R}^n, b>0, c\in \mathbb{R}\right\}$$
is the extremal set of the fractional Sobolev inequality and \begin{equation}
	\label{eq:sobconst}
	S_{n,s} = \Big((4\pi)^s \ \frac{\Gamma(\frac{n+2s}{2})}{\Gamma(\frac{n-2s}{2})} \left( \frac{\Gamma(\frac n2)}{\Gamma(n)} \right)^{2s/n}\Big)^{-1}
	= \Big(\frac{\Gamma(\frac{n+2s}{2})}{\Gamma(\frac{n-2s}{2})} \ |\mathbb{S}^n|^{2s/n}\Big)^{-1} \
\end{equation}
is the sharp constant of the fractional Sobolev inequality. This kind of stability inequality was  proposed in Brezis and Lieb's work in \cite{BrLi}. Bianchi and Egnell in \cite{BiEg} first obtained the stability of the first order Sobolev inequality (i.e., $s=1$). Stabilities of Sobolev inequalities for $s=2$, even positive integers $s<\frac{n}{2}$ and all fractional $0<s<\frac{n}{2}$ were established by the second author and Wei \cite{LuWe}, by Bartsch, Weth and Willem \cite{BaWeWi}, and by Chen, Frank and Weth \cite{ChFrWe} respectively.   It should be noted that although the fractional Sobolev inequality is equivalent to the HLS inequality of diagonal case, their stabilities are not fully equivalent. Carlen \cite{Car} developed a duality theory of the stability  based on the quantitative convexity and deduced the following stability of HLS inequality from the stability of fractional Sobolev inequality:
There exists some constant $C_0(n,s)$ such that for any $g\in L^{\frac{2n}{n+2s}}(\mathbb{R}^n)$, there holds
$$\|g\|^2_{\frac{2n}{n+2s}}-S_{n,s}^{-1}\|(-\Delta)^{-s/2}g\|_2^2\geq C_0(n,s)\inf\limits_{h\in M_{HLS}}\|g-h\|^2_{\frac{2n}{n+2s}},$$
where $$ M_{HLS}:=\left\{c(\frac{2b}{b^2+|x-a|^2})^{\frac{n+2s}{2}}, a\in \mathbb{R}^n, b>0, c\in \mathbb{R}\right\}.$$
\medskip

Stability of Sobolev inequality is proved by establishing the local stability of Sobolev inequality based on the spectrum analysis of elliptic or high-order elliptic operator and using the Lions concentration compactness technique to obtain global stability of Sobolev inequality. (see \cite{BiEg},   \cite{LuWe},  \cite{BaWeWi} and  \cite{ChFrWe}.) However, this method does not tell us any explicit lower bound information about the sharp constant of stability of Sobolev inequality. Recently, Dolbeault, Esteban, Figalli, Frank and Loss in \cite{DEFFL} for the first time obtained the stability for first order Sobolev inequality with the explicit lower bound. More precisely, they proved that for any $f\in \dot{H}^1(\mathbb{R}^n)$, there holds
\begin{equation*}
	\label{eq:sob}
	 S_{n,1}\left\|\nabla f \right\|_2^2 -\| f\|_{\frac{2n}{n-2}}^2 \geq C(n,1)\inf\limits_{h\in M_{1}}\|\nabla (f-h)\|^2_{2},
\end{equation*}
with $C(n,1)\gtrsim \frac{1}{n}$ as $n\rightarrow +\infty$. As an application, they established the stability for Gaussian log-Sobolev inequality with explicit constant. In their proof, there are two  key points. One is to establish the relation between the local stability and global stability of the  Sobolev inequality for positive functions. This needs to use the
the competing symmetries, the continuous Steiner symmetrization inequality (Poly\'{a}-Szeg\"{o} inequality) for the $L^2$ integral of the gradient of $u$.
The other is to establish the relation between the global stability of Sobolev inequality for positive functions and general functions. This is involved with the elementary inequality $\|\nabla u\|_{L^2}^2=\|\nabla u^{+}\|_{L^2}^2+\|\nabla u^{-}\|_{L^2}^2$ for the first order gradient. Naturally, if we would like to establish the stability of fractional Sobolev inequality with explicit lower bounds, we must first establish the relation between the local stability of fractional Sobolev inequality and the global stability of fractional Sobolev inequality for positive functions. Unfortunately, the continuous Steiner symmetrization inequality (Poly\'{a}-Szeg\"{o} inequality) for the $L^2$ integral of the gradient $u$ is not applicable to the $L^2$ integral of $(-\Delta)^{\frac{s}{2}}u$ for $s>1$. Furthermore, the invalidity of $\|(-\Delta)^{s/2} u\|_{L^2}^2=\|(-\Delta)^{s/2}(u^{+})\|_{L^2}^2+\|(-\Delta)^{s/2}(u^{-})\|_{L^2}^2$ for any $s>1$ also adds much challenge of establishing the relation between the global stability of fractional Sobolev inequality for positive functions and general functions. In order to overcome these difficulties, instead of directly considering the stability of fractional Sobolev inequality, we will first establish the stability for the Hardy-Littlewood-Sobolev (HLS) inequality (dual of the fractional Sobolev inequality).  This is possible to be achieved because the continuous Steiner symmetrization inequality (Poly\'{a}-Szeg\"{o} inequality) for the HLS integral $$\int_{\mathbb{R}^n}\int_{\mathbb{R}^n}f(x)|x-y|^{-{n-2s}}f(y)dxdy$$ and the elementary inequality
$\int_{\mathbb{R}^n}|u|^{\frac{2n}{n+2s}}dx=\int_{\mathbb{R}^n}|u^{+}|^{\frac{2n}{n+2s}}dx+\int_{\mathbb{R}^n}|u^{-}|^{\frac{2n}{n+2s}}dx$ hold.
Finally, we will apply the technique of dual stability of functional inequality established by Carlen in \cite{Car} to derive the stability of
of fractional Sobolev inequality with the explicit lower bound.
\vskip0.1cm

Denote by $$K_{n,s}=\sup\limits_{0<\delta<1}\frac{\delta}{2}\frac{n-2s}{n+2s}\min\{m(2\delta)\frac{n-2s}{n+2s},1\},\ \l(\delta)=\sqrt{\frac{\delta}{1-\delta}}, $$ where
$$m(\delta)=\frac{4s}{n+2s+2}-
\frac{2}{2^\ast_s}\sum_{k=3}^{2^\ast_s}\frac{2^{\ast}_s(2^\ast_s-1)\cdots(2^\ast_s-k+1)}{k!}l(\delta)^{k-2},$$
if $2^\ast_s=\frac{2n}{n-2s}$ is an integer
and
$$m(\delta)=\frac{4s}{n+2s+2}-
\frac{2}{2^\ast_s}\sum_{k=3}^{[2^\ast_s]}\frac{2^{\ast}_s(2^\ast_s-1)\cdots(2^\ast_s-k+1)}{k!}l(\delta)^{k-2}-
\frac{2}{2^\ast_s}l(\delta)^{2^\ast_s-2},$$
if  $2^\ast_s=\frac{2n}{n-2s}$ is not an integer.

\medskip
 We first establish the stability of HLS inequality with explicit lower bounds.

\begin{theorem}\label{thm1}
For $0<s<\frac{n}{2}$, let $\mathcal{S}_{HLS}(g)$ denote the HLS stability functional  given by
\begin{align}\label{sta hls func}
\mathcal{S}_{HLS}(g)=\frac{\|g\|^2_{\frac{2n}{n+2s}}-S_{n,s}^{-1}\|(-\Delta)^{-s/2}g\|_2^2}{\inf\limits_{h\in M_{HLS}}\|g-h\|^2_{\frac{2n}{n+2s}}}.
\end{align}
Then there holds
$$\inf\limits_{g\in L^{\frac{2n}{n+2s}}(\mathbb{R}^n)\setminus M_{HLS}} \mathcal{S}_{HLS}(g)\geq \frac{1}{2}\min\{K_{n,s},\min\{2^{\frac{n+2s}{n}}-2,1\}\}.$$
\end{theorem}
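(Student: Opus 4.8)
The plan is to reduce the stability of the HLS inequality to two ingredients: a \emph{local} stability estimate near the manifold $M_{HLS}$ of optimizers, and a mechanism to upgrade local stability to global stability, first for nonnegative functions and then for general functions. The constant $K_{n,s}$ in the statement is precisely the local stability constant (its definition via $m(\delta)$ records the second-order expansion of the nonlinear term $\|g\|_{\frac{2n}{n+2s}}^2$ on the manifold together with the control of the higher-order Taylor remainder through the quantities $l(\delta)$), while the factor $\min\{2^{\frac{n+2s}{n}}-2,1\}$ is the loss incurred when passing from nonnegative $g$ to sign-changing $g$ using the exact identity $\int|g|^{\frac{2n}{n+2s}}=\int|g^+|^{\frac{2n}{n+2s}}+\int|g^-|^{\frac{2n}{n+2s}}$.

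\textbf{Step 1: Local stability.} I would first show that there is a neighborhood of $M_{HLS}$ in $L^{\frac{2n}{n+2s}}(\mathbb{R}^n)$ on which $\mathcal{S}_{HLS}(g)\geq \tfrac12 K_{n,s}$. Using the conformal/stereographic change of variables sending $\mathbb{R}^n$ to $\mathbb{S}^n$, the HLS deficit becomes a spectral quantity for the operator whose extremals are constants (plus the lowest nontrivial spherical harmonics, which are exactly the directions tangent to $M_{HLS}$). Writing $g=g_0(1+\phi)$ with $g_0$ an optimizer and $\phi$ orthogonal to the tangent space of $M_{HLS}$, the quadratic part of the deficit is controlled below by the spectral gap, and the coefficient $\frac{4s}{n+2s+2}$ appearing in $m(\delta)$ is exactly this gap contribution; the remaining sums in $m(\delta)$ estimate the Taylor remainder of $t\mapsto|1+t|^{\frac{2n}{n+2s}}$ (treating the integer and non-integer cases of $2^\ast_s$ separately because the remainder has a different shape). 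Taking the supremum over the size parameter $\delta$ of the neighborhood gives $K_{n,s}$.

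\textbf{Step 2: From local to global for nonnegative functions.} Here I would run the competing-symmetries argument of Carlen--Loss together with the continuous Steiner symmetrization inequality for the HLS integral $\int\int f(x)|x-y|^{-(n-2s)}f(y)\,dx\,dy$. Starting from an arbitrary nonnegative $g$ (normalized), iterating the competing symmetries operator drives $g$ to a radial decreasing function and ultimately to an optimizer, and along this flow the HLS numerator is monotone while the distance to $M_{HLS}$ is controlled; continuous Steiner symmetrization provides a continuous path along which both quantities vary controllably, so that a function far from $M_{HLS}$ must have a definite deficit coming from the flow, and a function close to $M_{HLS}$ is handled by Step~1. This yields global stability for nonnegative $g$ with the same constant $\tfrac12 K_{n,s}$ (up to the duality bookkeeping).

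\textbf{Step 3: Sign-changing functions and conclusion.} For general $g$ write $g=g^+-g^-$. The denominator satisfies $\inf_{h\in M_{HLS}}\|g-h\|_{\frac{2n}{n+2s}}^2\leq \|g\|_{\frac{2n}{n+2s}}^2 = \|g^+\|_{\frac{2n}{n+2s}}^{?}\dots$— more precisely one uses $\|g\|_{\frac{2n}{n+2s}}^{\frac{2n}{n+2s}}=\|g^+\|^{\frac{2n}{n+2s}}+\|g^-\|^{\frac{2n}{n+2s}}$ together with the positivity-improving property of $(-\Delta)^{-s/2}$ (so that $\|(-\Delta)^{-s/2}g\|_2^2\leq \|(-\Delta)^{-s/2}(g^++g^-)\|_2^2$, hence the deficit of $g$ dominates that of $g^++g^-$ up to the concavity loss of $t\mapsto t^{\frac{n+2s}{n}}$, which produces the factor $2^{\frac{n+2s}{n}}-2$). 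Combining this reduction with the nonnegative case of Step~2, and then invoking Carlen's dual stability theory (already quoted in the excerpt) to transfer back and forth between the HLS functional and its dual only where needed, gives $\mathcal{S}_{HLS}(g)\geq \tfrac12\min\{K_{n,s},\min\{2^{\frac{n+2s}{n}}-2,1\}\}$.

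\textbf{Main obstacle.} The delicate point is Step~2: the continuous Steiner symmetrization inequality for the \emph{nonlocal} Riesz kernel must be used quantitatively, i.e.\ with an explicit modulus controlling how much the HLS energy decreases in terms of how far the function is from being a symmetric-decreasing optimizer, and this must be matched with the competing-symmetries contraction so that no constants are lost. Keeping every constant explicit through the conformal change of variables, the symmetrization flow, and the duality step — rather than merely proving existence of \emph{some} positive constant — is the crux, and is exactly where the structure of $K_{n,s}$ (a supremum over $\delta$ balancing the spectral gap against the Taylor remainder) comes from.
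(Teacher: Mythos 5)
Your overall architecture coincides with the paper's: local stability near $M_{HLS}$, upgrade to global stability for nonnegative functions via competing symmetries and continuous Steiner symmetrization of the HLS integral, and a positive/negative-part reduction producing the factor $\min\{2^{\frac{n+2s}{n}}-2,1\}$. Steps 2 and 3 of your outline are essentially the paper's Lemmas \ref{away} and \ref{constant}. However, your Step 1 diverges from the paper in a way that does not account for the stated constant. The paper does \emph{not} perform a direct spectral expansion of the HLS functional on the sphere. Instead it proves a local stability estimate for the dual \emph{fractional Sobolev} functional for nonnegative functions (Lemma \ref{local Sobolev stability}): there the quadratic form is $\|(-\Delta)^{s/2}\cdot\|_2^2$, the spectral gap $\frac{4s}{n+2s+2}$ of De Nitti--K\"onig applies directly, and the sums in $m(\delta)$ arise from a Taylor expansion of $\|u+r\|_{2^\ast_s}^2$ with exponent $2^\ast_s=\frac{2n}{n-2s}>2$ (Lemma \ref{expansion}) --- not, as you write, from expanding $t\mapsto|1+t|^{\frac{2n}{n+2s}}$, whose exponent is less than $2$ and whose expansion would have a different shape. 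The local HLS stability is then obtained from the local Sobolev stability by Carlen's duality lemma (Lemma \ref{local}), and it is exactly this duality step, together with the case distinction on whether $\|g\|^2_{\frac{2n}{n+2s}}\geq 2S_{n,s}^{-1}\|(-\Delta)^{-s/2}g\|_2^2$, that produces the factors $\frac{n-2s}{n+2s}$ and the inner $\min\{\cdot,1\}$ in $K_{n,s}$; your direct-on-HLS expansion would yield a differently structured constant and leaves these factors unexplained. The outer $\sup_{0<\delta<1}\frac{\delta}{2}(\cdots)$ in $K_{n,s}$ also cannot be produced in Step 1 alone: the extra factor $\delta$ comes from the far-from-manifold bound $\mathcal{S}_{HLS}(g)\geq\delta\mu(\delta)$ of your Step 2, so the optimization over $\delta$ happens only after the local and global regimes are combined.

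One further correction: your ``main obstacle'' is not where the difficulty lies. The argument needs no quantitative modulus for the continuous Steiner symmetrization. It only needs (i) monotonicity of the HLS double integral along the flow (Riesz's inequality for the continuous rearrangement), (ii) conservation of the $L^{\frac{2n}{n+2s}}$ norm, and (iii) continuity of $\tau\mapsto\inf_{h\in M_{HLS}}\|g_\tau-h\|_{\frac{2n}{n+2s}}$ (Lemma \ref{continous}), which via the intermediate value theorem lands the flow exactly on the boundary $\inf_h\|g_{\tau_0}-h\|^2=\delta\|g_{\tau_0}\|^2$ where the local estimate takes over. The actual delicate point, which your proposal skips, is the duality transfer in the local regime described above.
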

\begin{remark}
We note that Carlen \cite{Car} obtained the stability of HLS inequality. However, the explicit lower bound of the stability for HLS inequality was unknown in \cite{Car}. In Theorem \ref{thm1}, we give the stability of HLS inequality with the explicit lower bound.
\end{remark}
\begin{remark}
We also note that the stability of HLS with the explicit lower bound is  important and very useful because  the explicit lower bounds for the stabilities of some important geometric and functional inequalities such as the fractional Sobolev inequalities and Sobolev trace inequalities.
\end{remark}

The proof of Theorem \ref{thm1} is rather involved. For reader's convenience, we will give below an outline of its proof. The proof is divided into four steps.
\vskip0.1cm

{\bf Step 1.} We will first establish the quantitative estimate of the local stability of fractional Sobolev inequality for non-negative functions. Define
\begin{equation*}\begin{split}
\nu(\delta)=\inf\left\{\mathcal{S}_{S}(f): f\geq 0, \inf_{g\in M_s}\|(-\Delta)^{s/2}(f-g)\|_2^2\leq \delta\|(-\Delta)^{s/2}f\|_2^2\right\},
\end{split}\end{equation*}
where $$\mathcal{S}_{S}(f)=\frac{S_{s,n}\|(-\Delta)^{s/2}f\|_2^2-\|f\|^2_{2^\ast_s}}{\inf\limits_{h\in M_{s}}\|(-\Delta)^{s/2}(f-h)\|^2_{2}}.$$
We apply the refined estimate in the neighborhood of the Aubin-Talenti functions and spectrum analysis of the related operators to prove that
$$\nu(\delta)\geq S_{n,s}\left\{\frac{4s}{n+2s+2}-
\frac{2}{2^\ast_s}\sum_{k=3}^{[2^\ast_s]}\frac{2^{\ast}_s(2^\ast_s-1)\cdots(2^\ast_s-k+1)}{k!}l(\delta)^{k-2}-
\frac{2}{2^\ast_s}l(\delta)^{2^\ast_s-2}\right\}.$$

   {\bf Step 2.} We then establish the relation between the local stability for HLS inequality and fractional Sobolev inequality for non-negative functions. This will be done by applying the duality method of Carlen \cite{Car} to our situation. Define
$$\mu(\delta)=\inf\{\mathcal{S}_{HLS}(g): g>0, \inf_{h\in M_{HLS}}\|g-h\|^2_{\frac{2n}{n+2s}}\leq \delta\|g\|^2_{\frac{2n}{n+2s}}\},$$
we will prove that $$\mu(\frac{\delta}{2})\geq \frac{1}{2}\frac{n-2s}{n+2s}\min\{\frac{\nu(\delta)}{S_{n,s}}\frac{n-2s}{n+2s},1\}.$$

{\bf Step 3.} We establish the relationship between the local stability and the global stability of HLS inequality for positive functions.
  On the one hand, our proof is inspired by the method in  Dolbeault, Esteban, Figalli, Frank and Loss's work about the relationship between the local stability and the global stability of fractional Sobolev inequality for positive functions in \cite{DEFFL}. On the other hand,  we must apply the rearrangement flow technique for HLS integral to replace the rearrangement flow technique for $L^2$ integral of gradient because such a rearrangement flow technique is not applicable to $L^2$ integral of higher and fractional order gradient.   More precisely, we will prove that for any $0\leq g \in L^{\frac{2n}{n+2s}}(\mathbb{R}^n)$ satisfying $\inf\limits_{h\in M_{HLS}}\|g-h\|^2_{\frac{2n}{n+2s}}> \delta\|g\|^2_{\frac{2n}{n+2s}}$, there holds
$$\mathcal{S}_{HLS}(g)\geq \delta \mu(\delta).$$
Combining the results in Step 1 to Step 3, we derive the stability of HLS inequality with the explicit lower bound for positive functions.
\vskip 0.1cm

{\bf Step 4.} We finally establish the stability of HLS inequality with the explicit lower bound for general functions. This is achieved by establishing the relation between the stability of HLS inequality for positive functions and general functions. Denoting by $C_{HLS}$ and $C^{pos}_{HLS}$ the optimal constants of stability of HLS inequality for positive functions and general functions respectively, we prove that
$$C_{HLS}\geq \frac{1}{2}\min\{C^{pos}_{HLS},\min\{2^{\frac{n+2s}{n}}-2,1\}\}.$$
\medskip

By the dual stability theory and the stability of HLS inequality with the explicit lower bounds (Theorem \ref{thm1}), we will establish the following stability of fractional Sobolev inequality with the explicit lower bound.

\begin{theorem}\label{thm2}
For $0<s<\frac{n}{2}$, let $\mathcal{S}_{S}(f)$ denote the Sobolev stability functional given by
\begin{align}\label{sta sob func}
\mathcal{S}_{S}(f)=\frac{S_{n,s}\|(-\Delta)^{s/2}f\|_2^2-\|f\|^2_{2^\ast_s}}{\inf\limits_{h\in M_{s}}\|(-\Delta)^{s/2}(f-h)\|^2_{2}},
\end{align}
then $f\in \dot{H}^s(\mathbb{R}^n)\setminus M_s$, there holds
$$\inf\limits_{f\in \dot{H}^s(\mathbb{R}^n)\setminus M_s}\mathcal{S}_{S}(f)\geq \frac{S_{n,s}\min\{K_{n,s},\min\{2^{\frac{n+2s}{n}}-2,1\}\}}{4}.$$
\end{theorem}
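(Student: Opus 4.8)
The plan is to derive Theorem \ref{thm2} from Theorem \ref{thm1} by invoking the duality between the fractional Sobolev inequality and the HLS inequality, exactly along the lines of Carlen's dual stability theory \cite{Car}. The starting point is that for $f\in\dot H^s(\mathbb{R}^n)$ one sets $g=(-\Delta)^s f\,\|f\|_{2^\ast_s}^{2-2^\ast_s}|f|^{2^\ast_s-2}f$ — more precisely, one uses the standard correspondence that the Euler–Lagrange relation $g=(-\Delta)^{-s}$-dual of a near-optimizer $f$ of the Sobolev inequality produces a near-optimizer $g$ of the HLS inequality, with $(-\Delta)^{-s/2}g$ closely tied to $f$. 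The key algebraic identity is that the Sobolev deficit $S_{n,s}\|(-\Delta)^{s/2}f\|_2^2-\|f\|_{2^\ast_s}^2$ and the HLS deficit $\|g\|_{\frac{2n}{n+2s}}^2-S_{n,s}^{-1}\|(-\Delta)^{-s/2}g\|_2^2$ are comparable (up to normalization constants) when $f$ and $g$ are paired this way, and crucially that the distance $\inf_{h\in M_s}\|(-\Delta)^{s/2}(f-h)\|_2^2$ controls (from above, after appropriate normalization) the HLS distance $\inf_{h\in M_{HLS}}\|g-h\|_{\frac{2n}{n+2s}}^2$. This last comparison is where the factor involving $S_{n,s}$ and the numerical constant $4$ in the denominator of the stated bound will emerge.

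Concretely, the steps I would carry out are: first, reduce to $f$ normalized with $\|f\|_{2^\ast_s}=1$ and $\|(-\Delta)^{s/2}f\|_2^2 = S_{n,s}^{-1}(1+\eta)$ for small deficit parameter — actually, without loss of generality one need not assume the deficit is small, since Theorem \ref{thm1} is global. Second, define $g$ as the dual function and verify via the HLS/Sobolev duality (Legendre transform at the level of the functionals $\|g\|_{\frac{2n}{n+2s}}^2$ versus $\|(-\Delta)^{s/2}f\|_2^2$) that $\mathcal{S}_{HLS}(g)$ is well-defined and that the HLS deficit of $g$ equals a constant multiple of the Sobolev deficit of $f$. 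Third, bound $\inf_{h\in M_{HLS}}\|g-h\|_{\frac{2n}{n+2s}}^2$ from above by a constant times $\inf_{h\in M_s}\|(-\Delta)^{s/2}(f-h)\|_2^2$; the natural candidate $h\in M_{HLS}$ to test with is the dual of the optimal $h\in M_s$ closest to $f$, and one uses that near the Aubin–Talenti manifold the maps are bi-Lipschitz. Fourth, combine: $\mathcal{S}_S(f) \geq (\text{const})\,\mathcal{S}_{HLS}(g) \geq (\text{const})\cdot\frac12\min\{K_{n,s},\min\{2^{\frac{n+2s}{n}}-2,1\}\}$, and track the constant to see it equals $S_{n,s}/4$ times the min.

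The main obstacle I anticipate is the third step: establishing that the HLS distance of the dual function is bounded above by a controlled multiple of the Sobolev distance of $f$, uniformly in $f$ (not just near the optimizer manifold). The duality passes cleanly at the level of the deficits because those are governed by the Legendre transform, but the distance-to-the-manifold terms are not preserved by duality in an obvious way — one has to understand how the nonlinear duality map interacts with the two different extremal manifolds $M_s$ and $M_{HLS}$. Carlen's quantitative convexity argument handles this, and I would follow it: the point is that $t\mapsto \|g+t\varphi\|_{\frac{2n}{n+2s}}^2$ has a uniform modulus of convexity (since $\frac{2n}{n+2s}<2$ requires care — in fact one uses uniform convexity of $L^p$ for $1<p\le 2$ in the appropriate two-sided sense), which yields the needed comparison with a universal constant, and the constant $4 = 2\cdot 2$ reflects composing the factor $\tfrac12$ from Theorem \ref{thm1} with another factor $\tfrac12$ from this convexity/duality passage, while the $S_{n,s}$ is just the overall scaling between the two normalizations. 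A secondary technical point is handling the case where the infimum defining $\mathcal{S}_S(f)$ is not attained or $f$ is far from $M_s$, but since Theorem \ref{thm1} is a genuinely global statement with no smallness hypothesis, this causes no real difficulty beyond bookkeeping.
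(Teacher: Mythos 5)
Your high-level strategy --- pair $f$ with its dual $g=\|f\|_{2^\ast_s}^{2-2^\ast_s}|f|^{2^\ast_s-1}\mathrm{sgn}(f)$, pass the deficit through the Legendre transforms $\mathcal{F},\mathcal{E}$ and their conjugates, and then invoke Theorem \ref{thm1} --- is the paper's, and your accounting of the constant ($\tfrac12$ from Theorem \ref{thm1} composed with another $\tfrac12$, times $S_{n,s}$ from rescaling) is correct. But your third step is stated in the wrong direction, and that step is the crux. You propose to bound $\inf_{h\in M_{HLS}}\|g-h\|^2_{\frac{2n}{n+2s}}$ \emph{from above} by a multiple of $\inf_{h\in M_s}\|(-\Delta)^{s/2}(f-h)\|_2^2$. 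Since the deficit identity gives a \emph{lower} bound for the Sobolev deficit in terms of the HLS distance, what you actually need is the reverse control: the Sobolev distance of $f$ bounded from above by the HLS distance of $g$ plus a correction. Moreover the HLS distance alone cannot do this; the argument only closes because the Legendre expansion of the quadratic functional $\mathcal{F}$ carries an exact, nonnegative remainder,
\begin{equation*}
\mathcal{F}(f)-\mathcal{E}(f)=\mathcal{E}^\ast(g)-\mathcal{F}^\ast(g)+S_{n,s}\|(-\Delta)^{s/2}(f-f_1)\|_2^2,\qquad f_1=S_{n,s}^{-1}(-\Delta)^{-s}g,
\end{equation*}
which your outline never isolates and which is indispensable.

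Concretely, the paper applies Theorem \ref{thm1} to $\mathcal{E}^\ast(g)-\mathcal{F}^\ast(g)$, picks a near-optimal $g_0\in M_{HLS}$, converts $\|g-g_0\|^2_{\frac{2n}{n+2s}}$ into $S_{n,s}^{-1}\|(-\Delta)^{-s/2}(g-g_0)\|_2^2$ by the sharp HLS inequality, and then uses the elementary inequality $a\|X\|_2^2+\|Y\|_2^2\ge\tfrac{\min\{a,1\}}{2}\|X+Y\|_2^2$ with $X=S_{n,s}^{-1/2}(-\Delta)^{-s/2}(g-g_0)$ and $Y=S_{n,s}^{1/2}(-\Delta)^{s/2}(f-f_1)$, observing that $X+Y=S_{n,s}^{1/2}(-\Delta)^{s/2}\big(f-S_{n,s}^{-1}(-\Delta)^{-s}g_0\big)$ and that $S_{n,s}^{-1}(-\Delta)^{-s}g_0\in M_s$; this is where the second factor $\tfrac12$ arises. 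No uniform convexity of $L^p$ for $p<2$ and no bi-Lipschitz analysis of the duality map near the extremal manifolds is needed in this direction --- those tools belong to the reverse implication (HLS stability from Sobolev stability, Carlen's Lemma 3.4, used in the paper's Lemma \ref{local}), and a global bi-Lipschitz comparison of the two distances would in any case be far harder than what is required. As written, your plan does not close without supplying the displayed identity and the $L^2$ triangle-type inequality above.
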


\begin{remark} We note that the explicit lower bound of stability of fractional Sobolev inequality for all $0<s<\frac{n}{2}$ obtained in our paper is not as accurate as the explicit lower bound of the stability of the first order Sobolev inequality obtained by Dolbeault, Esteban, Figalli, Frank and Loss in \cite{DEFFL} in the sense that we do not have the asymptotic behavior when the dimension $n$ goes to infinity.\footnote{More recently, the authors have been able to establish in \cite{CLT2} the asymptotic estimates of the lower bound by proving the lower bound $C_{n, s}$ is at the order of $s$ as $s\to 0$ in all dimensions which was used to prove the global stability of the log-Sobolev inequality on the sphere in \cite{CLT2}, and behaves at the order of $\frac{1}{n}$ as $n\to \infty$ for all $s\in (0, 1)$. }

\end{remark}

\begin{remark}
We also note that K\"{o}nig in \cite{Ko} proved that $$\inf\limits_{f\in \dot{H}^s(\mathbb{R}^n)\setminus M_s}\mathcal{S}_{S}(f)<S_{n,s}\big(\frac{4s}{n+2+2s}\big)$$
by doing the third-order Taylor expansion to fractional Sobolev functional. This shows $\inf\limits_{f\in \dot{H}^s(\mathbb{R}^n)\setminus M_s}\mathcal{S}_{S}(f)$ is strictly smaller than the best constant of local stability of the fractional Sobolev inequalities. For similar results that the best constant for the global stable log-Sobolev inequality is strictly smaller than
the optimal constant of
locally stable log-Sobolev inequality, we refer the reader to \cite{CLT}.
\end{remark}

By the stereographic projection, we can immediately derive the stability of HLS inequality on the sphere $\mathbb{S}^{n}$ with the explicit lower bound.
\begin{corollary}\label{coro1}
For any $g\in L^{\frac{2n}{n+2s}}(\mathbb{S}^n)$, there holds
\begin{equation}\begin{split}\label{HLS-sphere}
&\big(\int_{\mathbb{S}^n}|g(\xi)|^{\frac{2n}{n+2s}}d\sigma_\xi\big)^\frac{n+2s}{n}-B_{n,s}\int_{\mathbb{S}^n}\int_{\mathbb{S}^n}g(\xi)|\xi-\eta|^{-(n-2s)}g(\eta)d\sigma_\xi d\sigma_\eta\\
&\ \ \geq \frac{1}{2}\min\{K_{n,s},\min\{2^{\frac{n+2s}{n}}-2,1\}\} \inf\limits_{h\in \widetilde{M}_{HLS}} \big(\int_{\mathbb{S}^{n}}|g-h|^{\frac{2n}{n+2s}}\big)^{\frac{n+2s}{n}}
\end{split}\end{equation}
where $B_{n,s}=\pi^{-\frac{n-2s}{2}}\big(\frac{\Gamma(\frac{n}{2})}{\Gamma(n)}\big)^{\frac{2s}{n}}\frac{\Gamma(\frac{n}{2}+s)}{\Gamma(s)} $ and $\widetilde{M}_{HLS}$ denotes the space consisting of the extremal functions of sharp HLS inequality on the sphere $\mathbb{S}^n$.

\end{corollary}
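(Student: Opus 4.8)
The plan is to transfer the Euclidean stability bound of Theorem~\ref{thm1} to the sphere $\mathbb{S}^n$ via the stereographic projection, which is conformal and maps $\mathbb{R}^n$ onto $\mathbb{S}^n$ minus a point. The key point is that under this conformal change of variables, the HLS bilinear form and the $L^{2n/(n+2s)}$-norm transform \emph{covariantly} up to explicit Jacobian factors, so the stability functional $\mathcal{S}_{HLS}$ is essentially invariant. Concretely, I would recall the stereographic projection $\mathcal{S}\colon \mathbb{R}^n\to\mathbb{S}^n$ and the conformal factor $J(x) = \big(\tfrac{2}{1+|x|^2}\big)^n$ (the Jacobian of the volume-form pullback), so that $d\sigma_\xi = J(x)\,dx$ and the chordal distance satisfies $|\xi-\eta| = \tfrac{2|x-y|}{(1+|x|^2)^{1/2}(1+|y|^2)^{1/2}}$. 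Given $g\colon\mathbb{S}^n\to\mathbb{R}$, set $\tilde g(x) = J(x)^{(n+2s)/(2n)}\, g(\mathcal{S}(x))$. A direct substitution then shows
\[
\int_{\mathbb{S}^n}|g|^{\frac{2n}{n+2s}}\,d\sigma = \int_{\mathbb{R}^n}|\tilde g|^{\frac{2n}{n+2s}}\,dx,
\]
and, using the formula for $|\xi-\eta|^{-(n-2s)}$ together with the exponent bookkeeping $\tfrac{n+2s}{2n}\cdot\tfrac{2n}{n+2s}=1$, the kernel $|x-y|^{-(n-2s)}$ is reproduced exactly with the leftover powers of $(1+|x|^2),(1+|y|^2)$ absorbed into the $J$-factors defining $\tilde g$. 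Hence
\[
\int_{\mathbb{S}^n}\!\!\int_{\mathbb{S}^n} g(\xi)|\xi-\eta|^{-(n-2s)}g(\eta)\,d\sigma_\xi d\sigma_\eta = 2^{-(n-2s)}\int_{\mathbb{R}^n}\!\!\int_{\mathbb{R}^n}\tilde g(x)|x-y|^{-(n-2s)}\tilde g(y)\,dx\,dy.
\]
Matching the constant $2^{-(n-2s)}S_{n,s}^{-1}$ against the definition of $B_{n,s}$ via the relation $S_{n,s}^{-1} = (4\pi)^s \tfrac{\Gamma(\frac{n+2s}{2})}{\Gamma(\frac{n-2s}{2})}\big(\tfrac{\Gamma(n/2)}{\Gamma(n)}\big)^{2s/n}$ and the duplication/reflection identities for the Gamma function pins down $B_{n,s}=\pi^{-\frac{n-2s}{2}}\big(\tfrac{\Gamma(n/2)}{\Gamma(n)}\big)^{2s/n}\tfrac{\Gamma(\frac{n}{2}+s)}{\Gamma(s)}$.

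With these identities in hand, the proof is short. For $g\in L^{\frac{2n}{n+2s}}(\mathbb{S}^n)$, let $\tilde g\in L^{\frac{2n}{n+2s}}(\mathbb{R}^n)$ be its conformal pull-back as above; then the left-hand side of \eqref{HLS-sphere} equals $\|\tilde g\|_{\frac{2n}{n+2s}}^2 - S_{n,s}^{-1}\|(-\Delta)^{-s/2}\tilde g\|_2^2$, which by Theorem~\ref{thm1} is at least $\tfrac12\min\{K_{n,s},\min\{2^{\frac{n+2s}{n}}-2,1\}\}\cdot\inf_{h\in M_{HLS}}\|\tilde g - h\|_{\frac{2n}{n+2s}}^2$. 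It remains to check that the infimum over $M_{HLS}\subset L^{\frac{2n}{n+2s}}(\mathbb{R}^n)$ pushes forward to the infimum over $\widetilde M_{HLS}$ on the sphere. This is exactly the statement that the conformal pull-back map $g\mapsto\tilde g$ is an isometry of $L^{\frac{2n}{n+2s}}$ (which it is, by the displayed norm identity applied to $g-h$), and that it maps $\widetilde M_{HLS}$ bijectively onto $M_{HLS}$ — this latter fact is the definition of the extremal set on the sphere: the extremals of the spherical HLS inequality are precisely the conformal images of the Euclidean extremals $c\big(\tfrac{2b}{b^2+|x-a|^2}\big)^{\frac{n+2s}{2}}$, which (after including the point at infinity, corresponding to constants on $\mathbb{S}^n$) form $\widetilde M_{HLS}$. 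Combining these yields \eqref{HLS-sphere} with the claimed constant.

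The only genuinely delicate points are bookkeeping ones rather than conceptual obstacles. First, one must verify the Jacobian exponents in the definition of $\tilde g$ so that \emph{both} the $L^{2n/(n+2s)}$-norm is exactly preserved and the Riesz kernel is exactly reproduced with no residual weights — this is the standard conformal-weight computation but it must be done carefully for general $0<s<\tfrac n2$, not just $s=1$. Second, the identification of the constant $B_{n,s}$ requires the Gamma-function manipulation $S_{n,s}^{-1}\cdot 2^{-(n-2s)} = B_{n,s}$, i.e. $(4\pi)^s\tfrac{\Gamma(\frac{n+2s}{2})}{\Gamma(\frac{n-2s}{2})}\big(\tfrac{\Gamma(n/2)}{\Gamma(n)}\big)^{2s/n} 2^{-(n-2s)} = \pi^{-\frac{n-2s}{2}}\big(\tfrac{\Gamma(n/2)}{\Gamma(n)}\big)^{2s/n}\tfrac{\Gamma(n/2+s)}{\Gamma(s)}$, which, after cancelling the common $\big(\tfrac{\Gamma(n/2)}{\Gamma(n)}\big)^{2s/n}$ factor, reduces to the classical identity expressing the Riesz potential normalization on the sphere — and this is where one uses that stereographic projection sends the Euclidean Riesz kernel to a constant multiple of the spherical chordal kernel. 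Third, one should note that $\widetilde M_{HLS}$, unlike $M_{HLS}$, is a \emph{compact} family (the point at infinity is not special on $\mathbb{S}^n$), so the infimum over $\widetilde M_{HLS}$ is attained; this is harmless for the inequality but worth remarking. Beyond these, the corollary is an immediate consequence of Theorem~\ref{thm1} and the conformal invariance, so no new analytic input (no further symmetrization, spectral analysis, or duality) is needed.
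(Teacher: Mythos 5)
Your overall strategy is exactly the paper's: the authors give no separate proof of Corollary \ref{coro1} and simply assert that it follows from Theorem \ref{thm1} by stereographic projection, which is what you carry out. However, your constant bookkeeping contains a concrete error that would derail the verification if followed literally. With $\tilde g(x)=J(x)^{(n+2s)/(2n)}g(\mathcal{S}(x))$, the spherical double integral is \emph{exactly} equal to the Euclidean one, with no residual factor $2^{-(n-2s)}$: writing out
\begin{equation*}
g(\mathcal{S}(x))\,(1+|x|^2)^{\frac{n-2s}{2}}J(x)=\tilde g(x)\Bigl(\tfrac{1+|x|^2}{2}\Bigr)^{\frac{n+2s}{2}}(1+|x|^2)^{\frac{n-2s}{2}}\Bigl(\tfrac{2}{1+|x|^2}\Bigr)^{n}=2^{\frac{n-2s}{2}}\tilde g(x),
\end{equation*}
the two factors $2^{\frac{n-2s}{2}}$ coming from $x$ and $y$ cancel the $2^{-(n-2s)}$ produced by the chordal-distance formula. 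Consequently the identity you propose to verify, $B_{n,s}=2^{-(n-2s)}S_{n,s}^{-1}$, is false (for $n=3$, $s=1$ the two sides even carry different powers of $\pi$), and the "Gamma-function manipulation" you describe cannot succeed.

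What is actually needed to match constants is the normalization of the Riesz potential, which your outline omits: since the kernel of $(-\Delta)^{-s}$ is $\frac{\Gamma(\frac{n-2s}{2})}{4^{s}\pi^{n/2}\Gamma(s)}|x|^{-(n-2s)}$, one has
\begin{equation*}
S_{n,s}^{-1}\bigl\|(-\Delta)^{-s/2}\tilde g\bigr\|_2^2=S_{n,s}^{-1}\,\frac{\Gamma(\frac{n-2s}{2})}{4^{s}\pi^{n/2}\Gamma(s)}\iint_{\mathbb{R}^n\times\mathbb{R}^n}\frac{\tilde g(x)\tilde g(y)}{|x-y|^{n-2s}}\,dx\,dy,
\end{equation*}
and the coefficient on the right simplifies, using $S_{n,s}^{-1}=(4\pi)^{s}\frac{\Gamma(\frac{n+2s}{2})}{\Gamma(\frac{n-2s}{2})}\bigl(\frac{\Gamma(n/2)}{\Gamma(n)}\bigr)^{2s/n}$, to $\pi^{-\frac{n-2s}{2}}\bigl(\frac{\Gamma(n/2)}{\Gamma(n)}\bigr)^{2s/n}\frac{\Gamma(\frac{n}{2}+s)}{\Gamma(s)}=B_{n,s}$ directly — no duplication or reflection formula is required. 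With this correction, the rest of your argument (the $L^{\frac{2n}{n+2s}}$-isometry of the pullback and the bijection between $\widetilde M_{HLS}$ and $M_{HLS}$) is sound and yields the corollary as stated.
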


Using the sharp Hardy-Littlewood-Sobolev inequality and duality theory, Beckner (see Theorem 5 and Theorem 9 in \cite{Beckner1}) established the following sharp restrictive Sobolev inequalities on the flat sub-manifold $\mathbb{R}^{n-1}$ and the sphere $\mathbb{S}^{n-1}$ respectively:
\begin{equation}\label{res1}
C_{n,s}\int_{\mathbb{R}^n}|(-\Delta)^{\frac{s}{2}}u|^2dx\geq  \big(\int_{\mathbb{R}^{n-1}}|u(x',0)|^{\frac{2(n-1)}{n-2s}}dx'\big)^{\frac{n-2s}{n-1}}
\end{equation}
and
\begin{equation}\label{res2}
D_{n,s}\int_{\mathbb{R}^n}|(-\Delta)^{\frac{s}{2}}u|^2dx\geq \big(\int_{\mathbb{S}^{n-1}}|u(\xi)|^{\frac{2(n-1)}{n-2s}}d\sigma_{\xi}\big)^{\frac{n-2s}{n-1}},
\end{equation}
where $u\in \dot{H}^s(\mathbb{R}^n)$, $s>\frac{1}{2}$ and
$$C_{n,s}=(4\pi)^{-s} \frac{\Gamma(s-\frac{1}{2})}{\Gamma(\frac{n}{2}+s-1)}\frac{\Gamma(\frac{n}{2}-s)}{\Gamma(s)}\big(\frac{\Gamma(n-1)}{\Gamma(\frac{n-1}{2})}\big)^{\frac{2s-1}{n-1}}$$ and $$D_{n,s}=(4\pi)^{-s}\frac{\Gamma(\frac{n-2s}{2})}{\Gamma(\frac{n-2+2s}{2})}\big(\frac{\Gamma(n-1)}{\Gamma(\frac{n-1}{2})}\big)^{\frac{2s-1}{n-1}}\frac{\Gamma(s-\frac{1}{2})}{\Gamma(s)}$$ are the best constants of restrictive Sobolev inequalities \eqref{res1} and \eqref{res2} respectively.  From  Beckner's proof, we can see that the restrictive Sobolev inequality \eqref{res1} is equivalent to the HLS inequality in $\mathbb{R}^{n-1}$. Hence we can deduce that equality in inequality \eqref{res1} holds if and only if $u=(-\Delta)^{-\frac{s}{2}}(T^{*}(g))$, where $g$ is the extremal function of HLS inequality in $\mathbb{R}^{n-1}$ and $$T^{*}(g)(x', x_n)=c_{n,s}\int_{\mathbb{R}^{n-1}}\frac{g(y')}{\big(|x'-y'|^2+|x_n|^2\big)^{\frac{n-s}{2}}}dy'$$ (see also the proof of Theorem \ref{thm3}). Similarly, the equality in the restrictive Sobolev inequality \eqref{res2} holds if and only if $u=(-\Delta)^{-\frac{s}{2}}(\widetilde{T}^{*}(g))$, where $g$ is the extremal function of HLS inequality in $\mathbb{S}^{n-1}$ and $$\widetilde{T}^{*}(g)(x)=c_{n,s}\int_{\mathbb{S}^{n-1}}\frac{g(\xi)}{|x-\xi|^{n-s}}d\sigma_{\xi}$$ (see also the proof of Theorem \ref{thm4}).
\medskip

 As an application of Theorem \ref{thm1}, we will further establish the stability of sharp restrictive Sobolev inequality on the flat sub-manifold $\mathbb{R}^{n-1}$ and the sphere $\mathbb{S}^{n-1}$ with the explicit lower bound.

\begin{theorem}\label{thm3}
For any $f\in \dot{H}^s(\mathbb{R}^n)\setminus M_{\mathbb{R}^{n-1}}$, there holds
\begin{equation*}\begin{split}
&C_{n,s}\int_{\mathbb{R}^n}|(-\Delta)^{\frac{s}{2}}f|^2dx- \big(\int_{\mathbb{R}^{n-1}}|f(x',0)|^{\frac{2(n-1)}{n-2s}}dx'\big)^{\frac{n-2s}{n-1}}\\
&\ \ \geq \frac{C_{n,s}\min\{K_{n-1,s-\frac{1}{2}},\min\{2^{\frac{n-2+2s}{n-1}}-2,1\}\}}{4}\inf\limits_{h\in M_{\mathbb{R}^{n-1}}}\int_{\mathbb{R}^n}|(-\Delta)^{\frac{s}{2}}(f-h)|^2dx,
\end{split}\end{equation*}
where $M_{\mathbb{R}^{n-1}}$ denotes the space consisting of the extremal functions of sharp restrictive Sobolev inequality on flat sub-manifold $\mathbb{R}^{n-1}$ \eqref{res1}.
\end{theorem}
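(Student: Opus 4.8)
The plan is to reduce \eqref{res1} to the fractional Sobolev inequality of order $s-\tfrac12$ on $\mathbb{R}^{n-1}$ and then invoke Theorem \ref{thm2}. The bridge is the \emph{optimal extension operator}: for $\phi\in\dot{H}^{s-1/2}(\mathbb{R}^{n-1})$ let $E[\phi]\in\dot{H}^s(\mathbb{R}^n)$ be the (unique, linear in $\phi$) minimizer of $\|(-\Delta)^{s/2}v\|_{L^2(\mathbb{R}^n)}^2$ among all $v$ with $v(\cdot,0)=\phi$; this is exactly the extremal extension underlying Beckner's proof of \eqref{res1}. Two properties of $E$ are needed, both contained in Beckner's computation. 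First, a Fourier-side identity
\[
\|(-\Delta)^{s/2}E[\phi]\|_{L^2(\mathbb{R}^n)}^2 = a_{n,s}\int_{\mathbb{R}^{n-1}}|\xi|^{2s-1}\,|\widehat{\phi}(\xi)|^2\,d\xi = a_{n,s}\,\|(-\Delta)^{\frac{2s-1}{4}}\phi\|_{L^2(\mathbb{R}^{n-1})}^2
\]
for an explicit constant $a_{n,s}>0$. Second, the first variation of the minimization forces $E[\phi]$ to be $\dot{H}^s(\mathbb{R}^n)$-orthogonal to every function with vanishing trace (equivalently, $(-\Delta)^sE[\phi]$ is a distribution supported on $\mathbb{R}^{n-1}$); hence, writing an arbitrary $f\in\dot{H}^s(\mathbb{R}^n)$ as $f=E[\phi]+r$ with $\phi:=f(\cdot,0)$ and $r(\cdot,0)=0$, one gets the Pythagorean identity $\|(-\Delta)^{s/2}f\|_{L^2(\mathbb{R}^n)}^2=a_{n,s}\,\|(-\Delta)^{\frac{2s-1}{4}}\phi\|_{L^2(\mathbb{R}^{n-1})}^2+\|(-\Delta)^{s/2}r\|_{L^2(\mathbb{R}^n)}^2$. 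Finally, since \eqref{res1} is sharp, comparing it with the sharp $\dot{H}^{s-1/2}(\mathbb{R}^{n-1})$ Sobolev inequality (whose exponent is $\tfrac{2(n-1)}{n-2s}$) forces $C_{n,s}\,a_{n,s}=S_{n-1,s-1/2}$, while the equality-case description recalled after \eqref{res2} identifies $M_{\mathbb{R}^{n-1}}$ with the set $\{E[\psi]:\psi\text{ a Sobolev extremal on }\mathbb{R}^{n-1}\text{ of order }s-\tfrac12\}$.

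With this in hand the transfer is soft. By linearity of $E$, for $h=E[\psi]\in M_{\mathbb{R}^{n-1}}$ we have $f-h=E[\phi-\psi]+r$, with trace $\phi-\psi$ and zero-trace part the same $r$; applying the Pythagorean identity to $f-h$ and taking the infimum,
\[
\inf_{h\in M_{\mathbb{R}^{n-1}}}\|(-\Delta)^{s/2}(f-h)\|_{L^2(\mathbb{R}^n)}^2 = a_{n,s}\,d_{n-1}(\phi)^2 + \|(-\Delta)^{s/2}r\|_{L^2(\mathbb{R}^n)}^2,
\]
where $d_{n-1}(\phi)^2:=\inf_\psi\|(-\Delta)^{\frac{2s-1}{4}}(\phi-\psi)\|_{L^2(\mathbb{R}^{n-1})}^2$ is the Sobolev distance on $\mathbb{R}^{n-1}$. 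Subtracting $\|\phi\|_{\frac{2(n-1)}{n-2s}}^2$ from $C_{n,s}$ times the Pythagorean identity and using $C_{n,s}a_{n,s}=S_{n-1,s-1/2}$, the deficit in \eqref{res1} splits as
\begin{align*}
& C_{n,s}\|(-\Delta)^{s/2}f\|_{L^2(\mathbb{R}^n)}^2 - \|\phi\|_{\frac{2(n-1)}{n-2s}}^2 \\
& \qquad = \bigl(\,S_{n-1,s-\frac12}\,\|(-\Delta)^{\frac{2s-1}{4}}\phi\|_{L^2(\mathbb{R}^{n-1})}^2 - \|\phi\|_{\frac{2(n-1)}{n-2s}}^2\,\bigr) + C_{n,s}\,\|(-\Delta)^{s/2}r\|_{L^2(\mathbb{R}^n)}^2.
\end{align*}

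Now I would apply Theorem \ref{thm2} on $\mathbb{R}^{n-1}$ with fractional order $s-\tfrac12$ — legitimate because $0<s-\tfrac12<\tfrac{n-1}{2}$, using $\tfrac12<s<\tfrac n2$, and the corresponding Sobolev exponent in dimension $n-1$ is exactly $\tfrac{2(n-1)}{n-2s}$ — to bound the first bracket below by $\tfrac14\,S_{n-1,s-1/2}\,\kappa\,d_{n-1}(\phi)^2$, where $\kappa:=\min\{K_{n-1,s-1/2},\min\{2^{\frac{n-2+2s}{n-1}}-2,1\}\}$. Since $C_{n,s}a_{n,s}=S_{n-1,s-1/2}$ this term equals $\tfrac14\,C_{n,s}\,\kappa\,\bigl(a_{n,s}\,d_{n-1}(\phi)^2\bigr)$, while $C_{n,s}\|(-\Delta)^{s/2}r\|_{L^2(\mathbb{R}^n)}^2\geq\tfrac14\,C_{n,s}\,\kappa\,\|(-\Delta)^{s/2}r\|_{L^2(\mathbb{R}^n)}^2$ because $\kappa\leq1$. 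Adding the two lower bounds and recognizing the displayed formula for $\inf_{h\in M_{\mathbb{R}^{n-1}}}\|(-\Delta)^{s/2}(f-h)\|_{L^2(\mathbb{R}^n)}^2$ yields exactly the asserted inequality.

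The genuine work — and the main obstacle — is the preliminary step: making the optimal-extension operator $E$ and its two properties fully rigorous (existence and linearity, the explicit Dirichlet-energy identity with its constant $a_{n,s}$, and the $\dot{H}^s$-orthogonality of $E[\phi]$ to zero-trace functions), together with the identifications $C_{n,s}a_{n,s}=S_{n-1,s-1/2}$ and $M_{\mathbb{R}^{n-1}}=E[\{\text{Sobolev extremals on }\mathbb{R}^{n-1}\text{ of order }s-\tfrac12\}]$. All of this is latent in Beckner's derivation of \eqref{res1} and in the equality analysis $u=(-\Delta)^{-s/2}(T^{*}g)$ recalled above, but it must be written out with care; once it is in place, the stability transfer above is entirely elementary. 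An essentially equivalent route stays on the HLS side: Beckner's duality identifies \eqref{res1} with the HLS inequality on $\mathbb{R}^{n-1}$ with kernel $|x-y|^{-(n-2s)}$ (parameter $s-\tfrac12$ in dimension $n-1$), so one may instead invoke Theorem \ref{thm1} in dimension $n-1$ and apply the same dual-stability transfer used there to pass from HLS stability to Sobolev stability; the bookkeeping is the same.
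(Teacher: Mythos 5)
Your argument is correct and, once the extension-operator facts you flag are written out, it proves the theorem with exactly the stated constant; but it is organized differently from the paper's proof, so a comparison is worthwhile. The paper never introduces the optimal extension $E$: it stays on the HLS side, rewriting \eqref{res1} as the dual HLS inequality $\big(\int_{\mathbb{R}^{n-1}}|g|^{\frac{2(n-1)}{n-2+2s}}dx'\big)^{\frac{n-2+2s}{n-1}}\geq C_{n,s}^{-1}\int_{\mathbb{R}^n}|T^{*}(g)|^2dx$, and then runs Carlen's Legendre-transform identity
\begin{equation*}
\mathcal{F}(f)-\mathcal{E}(f)=\mathcal{E}^{\ast}(g)-\mathcal{F}^{\ast}(g)+C_{n,s}\|(-\Delta)^{s/2}(f-f_1)\|_2^2,
\end{equation*}
with $g$ the (nonlinear) dual point of the trace $f(\cdot,0)$ and $f_1=C_{n,s}^{-1}(-\Delta)^{-s/2}(T^{*}(g))$; it then applies Theorem \ref{thm1} in dimension $n-1$ to the first term and closes with the elementary step $\kappa\|X\|^2+\|Y\|^2\geq\tfrac{\kappa}{2}\|X+Y\|^2$, which is where the extra factor $\tfrac12$ (turning $\kappa/2$ into $\kappa/4$) is lost. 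Your route replaces this by the exact Pythagorean splitting of both the Dirichlet energy and the distance to $M_{\mathbb{R}^{n-1}}$ along the trace/zero-trace decomposition $f=E[\phi]+r$, so no constant is lost in the transfer; the factor $\tfrac14$ instead arrives pre-packaged in Theorem \ref{thm2} (which the paper derives from Theorem \ref{thm1} by the same duality, losing the same factor there), and the two final constants coincide. What your version buys is transparency: the deficit and the distance decompose term by term, and the only analytic input beyond Theorem \ref{thm2} is the classical sharp trace theory ($\|(-\Delta)^{s/2}E[\phi]\|_2^2=a_{n,s}\|(-\Delta)^{\frac{2s-1}{4}}\phi\|_{L^2(\mathbb{R}^{n-1})}^2$ with $a_{n,s}$ finite precisely because $s>\tfrac12$, the first-variation orthogonality, and the soft identification $C_{n,s}a_{n,s}=S_{n-1,s-\frac12}$ forced by sharpness on both sides). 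What the paper's version buys is that it reuses verbatim the Section 4 computation with $(-\Delta)^{-s/2}$ replaced by $T^{*}$, so nothing about extensions needs to be set up. One small point to make explicit when you write this up: the identity $\inf_{h\in M_{\mathbb{R}^{n-1}}}\|(-\Delta)^{s/2}(f-h)\|_2^2=a_{n,s}d_{n-1}(\phi)^2+\|(-\Delta)^{s/2}r\|_2^2$ uses that \emph{every} element of $M_{\mathbb{R}^{n-1}}$ is of the form $E[\psi]$ with $\psi$ a Sobolev extremal of order $s-\tfrac12$ on $\mathbb{R}^{n-1}$, which is exactly the equality-case characterization the paper records after \eqref{res2}; without that surjectivity the infimum would not split.
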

\begin{remark}
The sharp  Sobolev trace inequality for first order derivative and extremals have been already obtained by Escobar in \cite{Escobar}. However, the stability of  Sobolev trace inequality for first order derivative remains unknown. Our Theorem \ref{thm3} in the case of $s=1$ in fact gives the stability of trace Sobolev inequality for first order derivative with the explicit lower bound. We also note that the sharp  Sobolev trace inequality involving the norm $\|\nabla u\|_{L^p(\mathbb{R}^{n}_{+})}$ and extremal have been obtained by Nazaret in \cite{Nazaret}.

\end{remark}

\begin{theorem}\label{thm4}
For any $f\in \dot{H}^s(\mathbb{R}^n)\setminus M_{\mathbb{S}^{n-1}}$, there holds
\begin{equation*}\begin{split}
&D_{n,s}\int_{\mathbb{R}^n}|(-\Delta)^{\frac{s}{2}}f|^2dx- \big(\int_{\mathbb{S}^{n-1}}|f(\xi)|^{\frac{2(n-1)}{n-2s}}d\sigma_{\xi}\big)^{\frac{n-2s}{n-1}}\\
&\ \ \geq \frac{D_{n,s}\min\{K_{n-1,s-\frac{1}{2}},\min\{2^{\frac{n-2+2s}{n-1}}-2,1\}\}}{4}\inf_{h\in M_{\mathbb{S}^{n-1}}} \int_{\mathbb{R}^n}|(-\Delta)^{\frac{s}{2}}(f-h)|^2dx,
\end{split}\end{equation*}
where $M_{\mathbb{S}^{n-1}}$ denotes the space consisting of the extremal functions of sharp restrictive Sobolev inequality on the sphere $\mathbb{S}^{n-1}$ \eqref{res2}.
\end{theorem}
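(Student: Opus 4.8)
The plan is to deduce Theorem~\ref{thm4} from Theorem~\ref{thm1} by combining Beckner's duality between the restrictive Sobolev inequality \eqref{res2} on $\mathbb{S}^{n-1}$ and the sharp HLS inequality on $\mathbb{S}^{n-1}$ with the dual stability machinery of Carlen \cite{Car} already used to pass from Theorem~\ref{thm1} to Theorem~\ref{thm2}. The first bookkeeping point is the dictionary of parameters: setting $m=n-1$ and $\sigma=s-\tfrac12$, the hypothesis $s>\tfrac12$ together with $s<\tfrac n2$ gives $0<\sigma<\tfrac m2$, the trace exponent $\tfrac{2(n-1)}{n-2s}$ equals $\tfrac{2m}{m-2\sigma}$, and the Riesz kernel exponent $n-2s$ equals $m-2\sigma$; thus the spherical HLS inequality on $\mathbb{S}^{n-1}$ of order $\sigma$ is precisely the inequality dual to \eqref{res2}.

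Next I would make this duality precise following \cite{Beckner1}. Writing $\widetilde T^{*}g(x)=c_{n,s}\int_{\mathbb{S}^{n-1}}|x-\xi|^{-(n-s)}g(\xi)\,d\sigma_\xi$ and $\widetilde T$ for its adjoint (the trace onto $\mathbb{S}^{n-1}$), one verifies the intertwining identity $\widetilde T\,(-\Delta)^{-s}\,\widetilde T^{*}=\kappa_{n,s}\,I_{\mathbb{S}^{n-1}}$, where $I_{\mathbb{S}^{n-1}}g=\int_{\mathbb{S}^{n-1}}|\xi-\eta|^{-(n-2s)}g(\eta)\,d\sigma_\eta$ is the spherical HLS operator. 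Substituting $v=(-\Delta)^{s/2}u$ turns the ratio $\|\widetilde T u\|_{\frac{2(n-1)}{n-2s}}^2/\|(-\Delta)^{s/2}u\|_2^2$ into the operator-norm problem for $A=\widetilde T(-\Delta)^{-s/2}\colon L^2\to L^{\frac{2(n-1)}{n-2s}}(\mathbb{S}^{n-1})$, whose Legendre dual is exactly the spherical HLS ratio $\langle g,I_{\mathbb{S}^{n-1}}g\rangle/\|g\|_{\frac{2(n-1)}{n-2s}}^2$; in particular $\sqrt{D_{n,s}}=\|A\|$, and the extremal manifold $M_{\mathbb{S}^{n-1}}$ is $(-\Delta)^{-s/2}\widetilde T^{*}(\widetilde{M}_{HLS})$ up to reparametrization, so distances to the two extremal sets correspond under $\widetilde T^{*}$.

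With the duality in hand I would first record the stability of the spherical HLS inequality in dimension $m=n-1$ with parameter $\sigma=s-\tfrac12$; this follows from Theorem~\ref{thm1} via the stereographic projection in exactly the way Corollary~\ref{coro1} (the case $(m,\sigma)=(n,s)$) follows from it, and gives
$$
\|g\|_{\frac{2(n-1)}{n-2s}}^2-B\,\langle g, I_{\mathbb{S}^{n-1}}g\rangle\ \ge\ \tfrac12\min\{K_{n-1,\,s-\frac12},\ \min\{2^{\frac{n-2+2s}{n-1}}-2,\,1\}\}\ \inf_{h\in \widetilde{M}_{HLS}}\|g-h\|_{\frac{2(n-1)}{n-2s}}^2 ,
$$
with $B$ the sharp spherical HLS constant. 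Then I would feed this bound into Carlen's dual stability theorem: since $\|\cdot\|_{q'}$ on $L^{q'}(\mathbb{S}^{n-1})$ with $q'=\tfrac{2(n-1)}{n-2s}\le 2$ is uniformly convex of power type $2$ and $\dot H^s(\mathbb{R}^n)$ carries (through $A$) the Hilbertian norm dual to it, the same quantitative convexity--duality estimate that converts the bound $C_{HLS}\ge\tfrac12\min\{\dots\}$ of Step~4 into Theorem~\ref{thm2} turns a spherical HLS stability constant $c$ into a stability constant $\tfrac{D_{n,s}}{2}\,c$ for \eqref{res2}, with $M_{\mathbb{S}^{n-1}}$ playing the role of $\widetilde{M}_{HLS}$. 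Taking $c=\tfrac12\min\{K_{n-1,s-\frac12},\min\{2^{\frac{n-2+2s}{n-1}}-2,1\}\}$ yields exactly the claimed lower bound $\tfrac{D_{n,s}}{4}\min\{K_{n-1,s-\frac12},\min\{2^{\frac{n-2+2s}{n-1}}-2,1\}\}$.

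The main obstacle is the duality step. One must (i) verify the intertwining identity $\widetilde T(-\Delta)^{-s}\widetilde T^{*}=\kappa_{n,s}I_{\mathbb{S}^{n-1}}$ with the correct constant and establish that $\widetilde T^{*}$ maps $\widetilde{M}_{HLS}$ onto a reparametrization of $M_{\mathbb{S}^{n-1}}$ --- this is where the bulk of the computation lies, since $\widetilde T^{*}$ is a Stein--Weiss type operator on the sphere whose mapping properties must be pinned down carefully --- and (ii) make Carlen's passage from the HLS side to the trace side fully quantitative in this setting, tracking how $\inf_h\|\cdot\|_{q'}^2$ transforms into $\inf_h\|(-\Delta)^{s/2}(\cdot)\|_2^2$ so that no constant worse than $\tfrac{D_{n,s}}{2}$ is lost. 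Once (i) and (ii) are in place the remaining steps are routine given Theorem~\ref{thm1} and \cite{Car}; the proof of Theorem~\ref{thm3} is the same argument with $\mathbb{R}^{n-1}$, $C_{n,s}$ and $T^{*}$ in place of $\mathbb{S}^{n-1}$, $D_{n,s}$ and $\widetilde T^{*}$.
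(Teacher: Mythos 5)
Your proposal is correct and follows essentially the same route as the paper: the paper's proof of Theorem \ref{thm4} is exactly the Legendre-transform duality argument you describe, writing $\mathcal{F}(f)-\mathcal{E}(f)=\mathcal{E}^{\ast}(g)-\mathcal{F}^{\ast}(g)+D_{n,s}\|(-\Delta)^{s/2}(f-f_1)\|_2^2$ with $f_1=D_{n,s}^{-1}(-\Delta)^{-s/2}(\widetilde{T}^{*}(g))$, recognizing $\mathcal{E}^{\ast}(g)-\mathcal{F}^{\ast}(g)$ as the spherical HLS deficit in dimension $n-1$ with order $s-\tfrac12$, invoking the Corollary \ref{coro1}-type stability there, and losing the extra factor $\tfrac12$ through the same quantitative convexity step used to derive Theorem \ref{thm2}, with $(-\Delta)^{-s/2}(\widetilde{T}^{*}(g_0))\in M_{\mathbb{S}^{n-1}}$ giving the extremal-manifold correspondence you flag. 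The duality bookkeeping you identify as the main obstacle is handled in the paper exactly as you anticipate, relying on Beckner's identification of \eqref{res2} as the dual of the spherical HLS inequality.
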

This paper is organized as follows. Section 2 is devoted to some preliminaries. In Section 3, we will give the stability of HLS inequality with explicit lower bounds using competing symmetries, the continuous Steiner symmetrization inequality for the HLS integral and the local dual stability theory. In Section 4, we will establish the stability of fractional Sobolev inequality with explicit lower bounds. Sections 5 and 6 are devoted to calculating the lower-bound of stability of sharp restrictive Sobolev inequality on the flat sub-manifold $\mathbb{R}^{n-1}$ and the sphere $\mathbb{S}^{n-1}$.

\section{Preliminaries}
In this section, we will state some tools, including competing symmetry theorem, a continuous rearrangement flow and expansions with remainder term and so on,
which will be used in our proof.
\medskip

Let $f\in L^{p}(\mathbb{R}^n)$, $1<p<\infty$ and $f_k=(\mathcal{R}U)^kf,~~k\in \mathbb{N}$, where $\mathcal{R}f=f^\ast$ is the decreasing rearrangement of $f$ and
$$(Uf)(x)=\left(\frac{2}{|x-e_n|^2}\right)^{\frac{n}{p}}f\left(\frac{x_1}{|x-e_n|^2},\cdots,\frac{x_{n-1}}{|x-e_n|^2},\frac{|x|^2-1}{|x-e_n|^2}\right),$$
$e_n=(0,\cdots,0,1)\in \mathbb{R}^n$. Carlen and Loss~\cite{CaLo} proved the following competing symmetry theorem.

\begin{theorem}\label{compete sym}(Carlen-Loss)
Let $0\leq f\in L^{p}$ ($1<p<\infty$), $f_k=(\mathcal{R}U)^kf$ and $h(x)=\|f\|_{p}|\mathbb{S}^n|^{-1/p}\left(\frac{2}{1+|x|^2}\right)^{n/p}$.  Then
$$\lim_{k\rightarrow \infty}\|f_k(x)-h(x)\|_{p}=0.$$
\end{theorem}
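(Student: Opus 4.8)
The plan is to follow the classical competing-symmetries argument: iterate the two operations $\mathcal R$ (symmetric decreasing rearrangement) and $U$ (a conformal transformation adapted to the exponent $p$), show that the composite $\mathcal R U$ contracts toward a fixed profile, and identify that profile as $h$. First I would record the two structural facts that make everything work. (i) The map $U$ is an isometry of $L^p(\mathbb R^n)$: the Jacobian weight $\bigl(2/|x-e_n|^2\bigr)^{n}$ is exactly what is needed so that $\|Uf\|_p=\|f\|_p$, because $U$ is the $L^p$-normalized pullback under the stereographic-type inversion centered at $e_n$; moreover $U$ maps nonnegative functions to nonnegative functions. (ii) The rearrangement $\mathcal R$ is also an $L^p$-isometry on nonnegative functions, $\|\mathcal R f\|_p=\|f\|_p$. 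Hence the whole sequence $f_k=(\mathcal R U)^k f$ lives on the sphere $\{g\ge 0:\|g\|_p=\|f\|_p\}$, and the candidate limit $h$, being (up to the constant) the stereographic image of the constant function on $\mathbb S^n$, satisfies $\mathcal R h = h$ and $U h = h$ — it is the unique (up to the obvious scaling) joint fixed point among nonnegative functions, essentially because $U$-invariance forces conformal invariance under the full stereographic group while $\mathcal R$-invariance forces radial symmetry and monotonicity, and the only such function is $c\,(2/(1+|x|^2))^{n/p}$.

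Next I would set up the convergence mechanism. The key monotone quantity is the "interaction" or "mass concentration" functional
\[
\Phi(g)=\int_{\mathbb R^n} g(x)\, h(x)^{p-1}\,dx,
\]
equivalently the pairing of $g$ against the fixed profile. Two inequalities drive the proof. First, $U$ is in fact an \emph{equality} case here when tested against $h$: since $h$ is $U$-invariant and $U$ is self-adjoint in the appropriate sense (it is an involution-type conformal map), $\Phi(Uf)=\int (Uf)\,h^{p-1}=\int f\,(U^{*}h^{p-1}) $, and one checks $U^{*}h^{p-1}=h^{p-1}$, so $\Phi(Uf)=\Phi(f)$. Second, rearrangement \emph{strictly increases} $\Phi$ unless $f$ is already a translate-free symmetric decreasing function: by the Hardy--Littlewood inequality $\int g\,h^{p-1}\le \int g^{*}\,(h^{p-1})^{*}=\int g^{*}\,h^{p-1}$, since $h^{p-1}$ is itself symmetric decreasing and equals its own rearrangement. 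Therefore $\Phi(f_{k+1})\ge \Phi(f_k)$, the sequence $\Phi(f_k)$ is nondecreasing and bounded above (by Hölder, $\Phi(f_k)\le \|f_k\|_p\|h\|_{p'}^{p-1}=\|f\|_p\|h\|_{p'}^{p-1}$), hence convergent; and the increments $\Phi(f_{k+1})-\Phi(f_k)\to 0$.

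Then I would upgrade this to norm convergence. By weak-$*$ compactness (the $f_k$ are bounded in $L^p$, $1<p<\infty$, so reflexive) extract a subsequence $f_{k_j}\rightharpoonup \bar f$; the vanishing of the $\Phi$-increments together with the strictness of the Hardy--Littlewood inequality forces the limit to be a fixed point of $\mathcal R U$ with the same norm, hence $\bar f=h$ (using the uniqueness of the joint fixed point and that $\|\bar f\|_p=\|f\|_p=\|h\|_p$, which promotes weak convergence to strong in uniformly convex $L^p$). A standard argument — the limit is independent of the subsequence, and the full $\Phi$-monotonicity plus the Brezis--Lieb lemma rules out escape of mass to infinity — gives $\|f_k-h\|_p\to 0$ along the whole sequence. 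The main obstacle, and the place requiring genuine care rather than soft functional analysis, is precisely this last compactness step: ruling out that mass of $f_k$ leaks to spatial infinity or concentrates at a point. This is where the interplay of the two symmetrizations is essential — $\mathcal R$ prevents splitting into far-apart bumps while the conformal map $U$ (which exchanges the roles of $0$ and $\infty$ on the sphere) prevents all the mass from escaping to either $0$ or $\infty$ — and one makes it quantitative by showing that the deficit $\Phi(h)-\Phi(f_k)$ controls $\inf\|f_k-h\|_p$ up to the symmetries, so its vanishing yields the claim. (In Carlen--Loss this is carried out via a direct argument on the level sets; I would cite \cite{CaLo} for the detailed estimate and present the structure above.)
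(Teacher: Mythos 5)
The paper does not prove this statement at all: it is quoted verbatim from Carlen--Loss \cite{CaLo} as a known tool, so there is no internal proof to compare against. Judged on its own, your sketch follows the correct general strategy (the competing-symmetries argument of \cite{CaLo}): the isometry of $U$ on $L^p$, the fact that $h$ is a joint fixed point of $\mathcal R$ and $U$, the monotone functional $\Phi(g)=\int g\,h^{p-1}\,dx$ driven by the Hardy--Littlewood inequality and the $U$-invariance of $h^{p-1}$ are all correct and are genuine ingredients of the original proof.

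There is, however, a real gap in the final step. The backbone of the Carlen--Loss argument is the non-expansiveness of rearrangement, $\|u^\ast-v^\ast\|_p\le\|u-v\|_p$, which together with $Uh=h$ and the isometry of $U$ gives that $\|f_k-h\|_p$ is \emph{non-increasing} in $k$; this is what turns subsequential convergence into convergence of the whole sequence, and you never invoke it. Without it, your route is circular or underpowered at two points: (i) promoting weak to strong convergence via uniform convexity requires knowing the weak limit has full norm, which is exactly the "no loss of mass" issue you are trying to establish; and (ii) the claim that the deficit $\Phi(h)-\Phi(f_k)$ controls $\|f_k-h\|_p$ is a quantitative stability statement that is strictly harder than the theorem itself and is not how \cite{CaLo} proceed. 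The standard repair is: establish monotonicity of $\|f_k-h\|_p$ so its limit $D$ exists; use Helly selection on the symmetric decreasing functions $f_k$ ($k\ge1$), which obey the pointwise bound $f_k(x)\le C|x|^{-n/p}$, to extract an a.e.\ convergent subsequence; identify the limit as a fixed point using the equality cases (if $g$ and $\mathcal R Ug$ are at the same distance $D$ from $h$, then $Ug$ must already be symmetric decreasing, and the only $g$ with both $g$ and $Ug$ symmetric decreasing is a multiple of $h$); conclude $D=0$. With that insertion your outline becomes a correct proof.
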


\vskip0.3cm

A continuous rearrangement flow which interpolates between a function and its symmetric decreasing rearrangement introduced in \cite{DEFFL} based on Brock's flow (\cite{Br1}, \cite{Br2}) plays an important part in our proof. More specifically, there
exists a flow $f_\tau$, $\tau\in [0,\infty]$, such that
$$f_0=f,~~f_\infty=f^\ast.$$
And if $0\leq f\in L^p(\mathbb{R}^n)$ for some $1\leq p<\infty$, then $\tau\rightarrow f_{\tau}$ is continuous in $L^{p}(\mathbb{R}^n)$. In our setting we need to
prove $\tau\rightarrow \inf\limits_{h\in M_{HLS}}\|f_\tau-h\|_{\frac{2n}{n+2s}}$ is continuous.

\begin{lemma}\label{continous}
Let $0\leq f\in L^{\frac{2n}{n+2s}}$. Then the function $\tau\rightarrow \inf\limits_{h\in M_{HLS}}\|f_\tau-h\|_{\frac{2n}{n+2s}}$ is continuous.
\end{lemma}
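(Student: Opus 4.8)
The plan is to prove continuity of $\tau \mapsto \inf_{h\in M_{HLS}}\|f_\tau - h\|_{\frac{2n}{n+2s}}$ by combining two facts: first, the map $\tau \mapsto f_\tau$ is continuous into $L^{\frac{2n}{n+2s}}(\mathbb{R}^n)$, which is recalled in the excerpt as a property of the continuous rearrangement (Brock) flow from \cite{DEFFL}; and second, for any fixed exponent $q = \frac{2n}{n+2s} \in (1,2)$, the function $F(u) := \inf_{h\in M_{HLS}}\|u - h\|_q$ is Lipschitz on $L^q(\mathbb{R}^n)$ with constant $1$. Granting these, continuity of the composition is immediate: $|F(f_\tau) - F(f_{\tau'})| \leq \|f_\tau - f_{\tau'}\|_q \to 0$ as $\tau' \to \tau$.

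The only substantive point is the Lipschitz property of $F$, which is in fact a general and elementary observation about distance functions to an arbitrary subset of a normed space: for any nonempty set $A \subset X$ and any $u, v \in X$,
\[
\operatorname{dist}(u,A) \leq \|u - v\| + \operatorname{dist}(v,A),
\]
since for every $h \in A$ one has $\|u-h\| \leq \|u-v\| + \|v-h\|$, and taking the infimum over $h \in A$ on the right, then on the left, gives the claim; swapping $u$ and $v$ yields $|\operatorname{dist}(u,A) - \operatorname{dist}(v,A)| \leq \|u-v\|$. Here $X = L^q(\mathbb{R}^n)$ with $q = \frac{2n}{n+2s}$, $A = M_{HLS}$, and $\operatorname{dist}(u, M_{HLS}) = \inf_{h\in M_{HLS}}\|u-h\|_q = F(u)$. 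No structural property of $M_{HLS}$ (closedness, convexity, compactness) is needed, so I would state this as a one-line lemma or simply inline it.

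Putting the pieces together: let $\tau_j \to \tau$ in $[0,\infty]$. By the continuity of the rearrangement flow in $L^q(\mathbb{R}^n)$ recalled above (note $0 \leq f \in L^{\frac{2n}{n+2s}}$ is exactly the hypothesis under which that continuity holds, including at the endpoint $\tau = \infty$ where $f_\infty = f^\ast$), we have $\|f_{\tau_j} - f_\tau\|_q \to 0$. Then
\[
\big|\,\inf_{h\in M_{HLS}}\|f_{\tau_j} - h\|_q - \inf_{h\in M_{HLS}}\|f_\tau - h\|_q\,\big| \leq \|f_{\tau_j} - f_\tau\|_q \longrightarrow 0,
\]
which is precisely the asserted continuity. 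I do not expect any genuine obstacle here; the lemma is a routine reduction of the desired continuity to the already-cited $L^q$-continuity of the Brock flow, and the role of the lemma in the paper is simply to license the use of this continuity inside the Step 3 argument (the relation between local and global stability of the HLS inequality for positive functions). The one place to be slightly careful is the endpoint $\tau = \infty$: I would make sure the cited flow continuity statement indeed includes convergence $f_\tau \to f^\ast$ in $L^q$ as $\tau \to \infty$, as stated in the excerpt, so that $F$ is continuous on the full interval $[0,\infty]$ and not merely on $[0,\infty)$.
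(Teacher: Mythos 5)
Your proposal is correct and is essentially the paper's own argument: the paper's $\varepsilon$-and-triangle-inequality computation with near-minimizers $g_{1,\tau}$ and $g_0$ is exactly the standard proof that the distance function to a set is $1$-Lipschitz, which you state abstractly and then compose with the $L^{\frac{2n}{n+2s}}$-continuity of the flow. No substantive difference.
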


\begin{proof}
Let us prove $\inf\limits_{h\in M_{HLS}}\|f_\tau-h\|_{\frac{2n}{n+2s}}\rightarrow\inf\limits_{h\in M_{HLS}}\|f_{\tau_0}-h\|_{\frac{2n}{n+2s}}$ as $\tau\rightarrow \tau_0$.
For any $\varepsilon>0$, there exists a $g_{1,\tau}\in M_{HLS}$ such that
$$\inf_{h\in M_{HLS}}\|f_\tau-h\|_{\frac{2n}{n+2s}}\geq \|f_\tau-g_{1,\tau}\|_{\frac{2n}{n+2s}}-\varepsilon.$$
Then
\begin{align}\label{upp est}
\inf\limits_{h\in M_{HLS}}\|f_\tau-h\|_{\frac{2n}{n+2s}}-\inf\limits_{h\in M_{HLS}}\|f_{\tau_0}-h\|_{\frac{2n}{n+2s}}\geq
 \|f_\tau-g_{1,\tau}\|_{\frac{2n}{n+2s}}- \|f_{\tau_0}-g_{1,\tau}\|_{\frac{2n}{n+2s}}-\varepsilon.
 \end{align}
Applying the triangle inequality, we obtain that $$\inf\limits_{h\in M_{HLS}}\|f_\tau-h\|_{\frac{2n}{n+2s}}-\inf\limits_{h\in M_{HLS}}\|f_{\tau_0}-h\|_{\frac{2n}{n+2s}}\geq -\|f_{\tau}-f_{\tau_0}\|_{\frac{2n}{n+2s}}-\varepsilon.$$
On the other hand, there exist a $g_0\in M_{HLS}$ such that
$$\inf_{h\in M_{HLS}}\|f_{\tau_0}-h\|_{\frac{2n}{n+2s}}\geq \|f_{\tau_0}-g_0\|_{\frac{2n}{n+2s}}-\varepsilon.$$
Thus we can similarly derive that
\begin{align*}\label{low est}
&\inf\limits_{h\in M_{HLS}}\|f_\tau-h\|_{\frac{2n}{n+2s}}-\inf\limits_{h\in M_{HLS}}\|f_{\tau_0}-h\|_{\frac{2n}{n+2s}}\\
&\ \ \leq \|f_\tau-g_0\|_{\frac{2n}{n+2s}}- \|f_{\tau_0}-g_0\|_{\frac{2n}{n+2s}}+\varepsilon\\
&\ \ \leq \|f_{\tau}-f_{\tau_0}\|_{\frac{2n}{n+2s}}+\epsilon.
\end{align*}
Combining the above estimate, we obtain for any $\epsilon>0$, there holds
\begin{equation*}\begin{split}
|\inf\limits_{h\in M_{HLS}}\|f_\tau-h\|_{\frac{2n}{n+2s}}-\inf\limits_{h\in M_{HLS}}\|f_{\tau_0}-h\|_{\frac{2n}{n+2s}}|\leq \|f_\tau-f_{\tau_0}\|_{\frac{2n}{n+2s}}+\epsilon.
\end{split}\end{equation*}

Applying the continuity of $\tau\rightarrow f_{\tau}$ in $L^{\frac{2n}{n+2s}}(\mathbb{R}^n)$ and let $\epsilon\rightarrow 0$, we conclude that $$\lim\limits_{\tau\rightarrow \tau_0}\inf\limits_{h\in M_{HLS}}\|f_\tau-h\|_{\frac{2n}{n+2s}}=\inf\limits_{h\in M_{HLS}}\|f_{\tau_0}-h\|_{\frac{2n}{n+2s}}.$$ Then the proof of Lemma \ref{continous} is accomplished.
\end{proof}

The following Taylor expansions of $\|u+r\|^2_{2^\ast_s}$ are also needed in our proof. We also note that when $s=1$, Taylor expansions of $\|u+r\|^2_{2^\ast_s}$ has been obtained in \cite{DEFFL}.
\begin{lemma}\label{expansion}
Let $u,r\in L^{2^\ast_s}(\mathbb{R}^n)$, $u+r\geq0$ and $u\geq 0$.
If $2^\ast_s$ is not an integer, then
\begin{equation*}\begin{split}
\|u+r\|^2_{2^\ast_s}&\leq \|u\|^2_{2^\ast_s}+\frac{2}{2^\ast_s}\sum_{k=1}^{[2^\ast_s]}\frac{2^{\ast}_s(2^\ast_s-1)\cdots(2^\ast_s-k+1)}{k!}\|u\|_{2^\ast_s}^{2-2^\ast_s}
\int_{\mathbb{R}^n}u^{2^\ast_s-k}r^kdx\\
&\ \ +\frac{2}{2^\ast_s}\|u\|_{2^\ast_s}^{2-2^\ast_s}\|r\|^{2^\ast_s}_{2^\ast_s},
\end{split}\end{equation*}
where $[2^\ast_s]$ is the integer part of $2^\ast_s$. If $2^\ast_s$ is an integer, then
$$\|u+r\|^2_{2^\ast_s}\leq \|u\|^2_{2^{\ast}_s}+\frac{2}{2^\ast_s}\sum_{k=1}^{[2^\ast_s]}\frac{2^{\ast}_s(2^\ast_s-1)\cdots(2^\ast_s-k+1)}{k!}\|u\|_{2^\ast_s}^{2-2^\ast_s}
\int_{\mathbb{R}^n}u^{2^\ast_s-k}r^kdx.$$
\end{lemma}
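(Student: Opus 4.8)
The plan is to reduce the vector-valued statement to a pointwise numerical inequality for the function $t\mapsto (a+t)^{2_s^\ast}$ and then integrate. First I would set $p=2_s^\ast$ and normalize so that $\|u\|_p=1$; by homogeneity of both sides of the claimed inequality (each side scales like $\lambda^2$ under $u\mapsto\lambda u$, $r\mapsto\lambda r$ once we keep the ratio fixed — more precisely I would argue directly with the $\|u\|_p^{2-p}$ factors in place, since $r$ is not scaled with $u$). The key elementary fact is the one-variable Taylor-type estimate: for $a\ge 0$, $a+t\ge 0$,
\begin{equation*}
(a+t)^{p}\le a^{p}+\sum_{k=1}^{[p]}\binom{p}{k}a^{p-k}t^{k}+|t|^{p}\quad\text{if }p\notin\mathbb{N},
\end{equation*}
and with the last term dropped (and exact equality, the binomial theorem) if $p\in\mathbb{N}$, where $\binom{p}{k}=\frac{p(p-1)\cdots(p-k+1)}{k!}$. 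Granting this, integrate over $\mathbb{R}^n$ with $a=u(x)$, $t=r(x)$: the left side becomes $\int (u+r)^p=\|u+r\|_p^p$, the middle terms become $\sum_k\binom{p}{k}\int u^{p-k}r^k$, and the remainder becomes $\int|r|^p=\|r\|_p^p$. This yields
\begin{equation*}
\|u+r\|_p^{p}\le 1+\sum_{k=1}^{[p]}\binom{p}{k}\int u^{p-k}r^k\,dx+\|r\|_p^{p}.
\end{equation*}
Finally, apply the concavity estimate $(1+x)^{2/p}\le 1+\tfrac{2}{p}x$ for $p\ge 2$ (valid here since $0<s<n/2$ forces $2_s^\ast=\frac{2n}{n-2s}>2$) with $x=\sum_k\binom{p}{k}\int u^{p-k}r^k+\|r\|_p^p$. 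Here one must check $x\ge -1$, i.e. that the bracketed quantity is $\ge 0$; but that quantity equals $\|u+r\|_p^p-1\ge 0$ since $u+r\ge0$, so the bound is legitimate. Undoing the normalization reinstates the factors $\|u\|_p^{2-p}$ and $\|u\|_p^2$, giving exactly the two displayed inequalities.

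The heart of the matter is therefore the pointwise inequality $(a+t)^p\le a^p+\sum_{k=1}^{[p]}\binom{p}{k}a^{p-k}t^k+|t|^p$. I would prove this by scaling to $a=1$ (the case $a=0$ is trivial, and for $a>0$ divide by $a^p$ and replace $t$ by $t/a$), reducing to: $(1+t)^p\le 1+\sum_{k=1}^{[p]}\binom{p}{k}t^k+|t|^p$ for all $t\ge -1$. For $t\ge 0$ this follows from the generalized binomial series $ (1+t)^p=\sum_{k\ge 0}\binom{p}{k}t^k$ together with the observation that the tail $\sum_{k>[p]}\binom{p}{k}t^k$ has terms which, after the first sign change, can be controlled; the cleanest route is to split $[0,1]$ and $[1,\infty)$: on $[0,1]$ use that the omitted-degree monomials are dominated appropriately and on $[1,\infty)$ bound the whole tail by $t^p$. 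For $-1\le t\le 0$ I would write $t=-\sigma$, $\sigma\in[0,1]$, and compare $(1-\sigma)^p$ with $1+\sum_k\binom{p}{k}(-\sigma)^k+\sigma^p$; since $(1-\sigma)^p\le 1$ and one checks the remaining polynomial-plus-$\sigma^p$ expression stays $\ge (1-\sigma)^p$ by a derivative/convexity argument or by the integral remainder form of Taylor's theorem with the $[p]$-th derivative. Indeed the slickest unified argument is Taylor's theorem with integral remainder of order $[p]$ for $\phi(t)=(1+t)^p$: the remainder is $\frac{1}{([p]-1)!}\int_0^t(t-\tau)^{[p]-1}\phi^{([p])}(\tau)\,d\tau$ with $\phi^{([p])}(\tau)=p(p-1)\cdots(p-[p]+1)(1+\tau)^{p-[p]}$, and since $0<p-[p]<1$ the factor $(1+\tau)^{p-[p]}\le (1+\tau)$ on the relevant range, after which one estimates the integral by $|t|^{[p]}\cdot$ (something) and absorbs into $|t|^p$; the sign of the leading Taylor coefficient $p(p-1)\cdots(p-[p]+1)$ (positive, since $p>1$) is what makes the one-sided bound go the right way.

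I expect the main obstacle to be precisely the bookkeeping in this pointwise inequality when $p$ is not an integer and $t<0$: one must ensure the alternating-sign terms $\binom{p}{k}t^k$ combine with the $+|t|^p$ remainder to dominate $(1+t)^p$ on the whole interval $[-1,0]$, and the endpoint $t=-1$ (where $(1+t)^p=0$) is a delicate equality/inequality check. The integer case is genuinely easier — there the binomial theorem gives equality up to the fact that we are raising to the power $2/p$, and the convexity step $(1+x)^{2/p}\le 1+\tfrac2p x$ does the rest with no remainder term — so I would present that case first as a warm-up and then handle the non-integer case. One should also double-check that all the integrals $\int u^{p-k}r^k\,dx$ are finite: this follows from $u,r\in L^p$ and Hölder with exponents $\frac{p}{p-k}$ and $\frac{p}{k}$ (for $k<p$), and the $k=[p]$ term when $p\notin\mathbb N$ is likewise Hölder-admissible; no issue arises for $u+r\ge0$ guaranteeing the left side is a well-defined element of $L^p$.
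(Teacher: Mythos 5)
Your plan is essentially the paper's proof: the same pointwise Taylor-type inequality $(1+t)^p\le 1+\sum_{k=1}^{[p]}\binom{p}{k}t^k+|t|^p$ for $t\ge -1$, followed by integration and the concavity bound $(1+x)^{2/p}\le 1+\tfrac{2}{p}x$ (valid since $x\ge -1$, as $1+x$ dominates $\|u+r\|_p^p\ge 0$). The one part you leave sketched --- the pointwise inequality for $t\in[-1,0]$ with $p$ non-integer --- is exactly where the paper invests its technical lemma, establishing it by the iterated-derivative route you mention: differentiating $[p]$ times, using $(1+t)^\alpha\le 1+|t|^\alpha$ for $0<\alpha<1$ at the top level, and tracking signs back down with a case distinction on the parity of $[p]$.
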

In order to prove Lemma ~\ref{expansion}, we need the following technical lemma.
\begin{lemma}
(1) For all $x\geq -1$, $q\geq 1$, $q$ is not an integer,
$$(1+x)^{q}\leq 1+\sum_{k=1}^{[q]}\frac{q(q-1)\cdots(q-k+1)}{k!}x^k+|x|^q,$$
where $[q]$ is the integer part of $q$.
\vskip0.1cm

(2) For all $x\geq -1$, $q\geq 1$, $q$ is an integer,
$$(1+x)^{q}= 1+\sum_{k=1}^{q}\frac{q(q-1)\cdots(q-k+1)}{k!}x^k.$$
\end{lemma}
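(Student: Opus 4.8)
The plan is to dispatch part (2) by the ordinary binomial theorem and to prove part (1) by Taylor's formula with integral remainder, handling $x\ge 0$ and $-1\le x<0$ separately.

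First I would set $m=[q]$ and $\theta=q-m\in(0,1)$ (so $m\ge 1$ since $q\ge 1$), write $\binom{q}{k}=\frac{q(q-1)\cdots(q-k+1)}{k!}$, and let
$$\phi(x)=(1+x)^{q}-\sum_{k=0}^{m}\binom{q}{k}x^{k},$$
the $m$-th order Taylor remainder of $(1+x)^q$ at $0$; the coefficients $\binom{q}{k}$ for $0\le k\le m$ are all positive because $q-j>0$ for $0\le j\le m-1<q$. The claim becomes $\phi(x)\le|x|^q$ for $x\ge -1$. Since $f(t)=(1+t)^q$ has $f^{(m)}$ absolutely continuous on $[-1,\infty)$ with $f^{(m+1)}(t)=q(q-1)\cdots(q-m)(1+t)^{\theta-1}\in L^{1}_{\mathrm{loc}}$ (here $\theta-1>-1$), Taylor's theorem with integral remainder gives, for every $x\ge -1$,
$$\phi(x)=\frac{q(q-1)\cdots(q-m)}{m!}\int_{0}^{x}(x-t)^{m}(1+t)^{\theta-1}\,dt.$$

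For $x\ge 0$ I would use the elementary bound $(1+t)^{\theta-1}\le t^{\theta-1}$ for $t>0$ (the exponent $\theta-1$ is negative), then the Beta integral $\int_{0}^{x}(x-t)^{m}t^{\theta-1}\,dt=x^{q}\,\frac{m!\,\Gamma(\theta)}{\Gamma(m+1+\theta)}$ together with the telescoping identity $\Gamma(q+1)=q(q-1)\cdots(q-m)\,\Gamma(\theta)$; these combine so that the prefactor collapses to exactly $1$, giving $\phi(x)\le x^{q}=|x|^{q}$. For $-1\le x<0$ write $x=-y$ with $y\in(0,1]$; the substitution $t=-u$ turns the remainder formula into
$$\phi(-y)=\frac{(-1)^{m+1}\,q(q-1)\cdots(q-m)}{m!}\int_{0}^{y}(y-u)^{m}(1-u)^{\theta-1}\,du,$$
whose sign is $(-1)^{m+1}$, so for $m$ even $\phi(-y)\le 0\le|x|^{q}$ is immediate. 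In general I would estimate $|\phi(-y)|$ via $(1-u)^{\theta-1}\le(y-u)^{\theta-1}$ on $(0,y)$ (valid since $1-u\ge y-u>0$ and $\theta-1<0$), which yields $\int_{0}^{y}(y-u)^{m}(1-u)^{\theta-1}\,du\le\int_{0}^{y}(y-u)^{q-1}\,du=y^{q}/q$, hence $|\phi(-y)|\le\binom{q-1}{m}y^{q}$ with $\binom{q-1}{m}=\prod_{j=1}^{m}\frac{q-j}{m+1-j}<1$ because each factor lies in $(0,1)$ (as $0<q-j<m+1-j$ from $m<q<m+1$). Thus $\phi(-y)\le|\phi(-y)|<y^{q}=|x|^{q}$, and combining the two ranges proves part (1).

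The routine ingredients are the Beta and Gamma identities and the finite binomial expansion of $(1+x)^q$ for integer $q$ (part (2)). The one step needing care is the range $-1\le x<0$, where the Taylor remainder $\phi(-y)$ need not be negative — its sign is $(-1)^{[q]+1}$ — so the naive estimate $(1-u)^{\theta-1}\le(1-y)^{\theta-1}$ is too weak near $y=1$ and must be replaced by $(1-u)^{\theta-1}\le(y-u)^{\theta-1}$, after which the decisive (and slightly surprising) point is that $\binom{q-1}{[q]}<1$. One should also note that $(1+t)^{\theta-1}$ is only in $L^{1}$, not $L^{\infty}$, near $t=-1$, which is why the integral form of Taylor's theorem, rather than the Lagrange form, is the convenient one here.
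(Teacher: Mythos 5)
Your proof is correct, and it takes a genuinely different route from the paper's. The paper works with $f(x)=(1+x)^q-1-\sum_{k=1}^{[q]}\binom{q}{k}x^k-|x|^q$, differentiates $[q]$ times until the problem reduces to the elementary inequalities $(1+t)^\alpha\le 1+t^\alpha$ (for $t\ge 0$) and $(1+t)^\alpha\le 1+(-t)^\alpha$ (for $-1\le t\le 0$) with $0<\alpha<1$, and then integrates back step by step, tracking the alternating signs of the successive derivatives on $[-1,0]$ with a case analysis according to the parity of $[q]$ (the odd case needs an extra monotonicity argument splitting $[-1,0]$ at $-1/2$). You instead write the $[q]$-th order Taylor remainder in integral form and bound the integrand pointwise, reducing everything to one Beta-integral evaluation for $x\ge 0$ and one explicit estimate for $-1\le x<0$. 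Your route yields something the paper's does not: the strict quantitative improvement $\phi(-y)\le\binom{q-1}{[q]}y^q$ with $\binom{q-1}{[q]}<1$ on the negative range, and a one-line identification of the constant $1$ on the positive range via $\Gamma(q+1)=q(q-1)\cdots(q-[q])\,\Gamma(q-[q])$; the paper's route is more elementary (no Gamma or Beta functions) but requires longer sign bookkeeping. Your attention to the endpoint $x=-1$ (absolute continuity of $f^{([q])}$ and integrability of $(1+t)^{\theta-1}$ there, which forces the integral rather than Lagrange form of the remainder) is exactly the right justification, and the key observation that the naive bound $(1-u)^{\theta-1}\le(1-y)^{\theta-1}$ fails near $y=1$ while $(1-u)^{\theta-1}\le(y-u)^{\theta-1}$ succeeds is correctly identified as the decisive step.
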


\begin {proof}
We only need to consider the case when $q$ is not an integer. Let
$$f(x)=(1+x)^q-1-\sum_{k=1}^{[q]}\frac{q(q-1)\cdots(q-k+1)}{k!}x^k-|x|^q.$$
We first prove that $f(x)\leq 0$ when $x\geq 0$. Since
$$f^{\prime}(x)=q(1+x)^{q-1}-\sum_{k=1}^{[q]}\frac{q(q-1)\cdots(q-k+1)}{(k-1)!}x^{k-1}-q x^{q-1},$$
$$\cdot\cdot\cdot,$$
$$f^{([q])}(x)=q(q-1)\cdots(q-[q]+1)\big((1+x)^{q-[q]}-1-x^{q-[q]}\big),$$
then
$$f(0)=f^{\prime}(0)=\cdots=f^{([q])}(0)=0.$$
Using the fundamental inequality $(1+x)^{\alpha}\leq 1+x^\alpha$ for $0<\alpha<1$ and $x\geq 0$, we obtain
$f^{([q])}(x)\leq 0~(x>0)$. Then $f^{([q]-1)}(x)$ is decreasing on $(0,+\infty)$, which implies
$f^{([q]-1)}(x)\leq f^{([q]-1)}(0)=0$. Repeat the deduction and we can obtain $f(x)\leq 0$ when $x\geq 0$.
\vskip0.1cm

\medskip

When $-1\leq x\leq 0$, we have
$$f^{\prime}(x)=q(1+x)^{q-1}-\sum_{k=1}^{[q]}\frac{q(q-1)\cdots(q-k+1)}{k!}x^{k-1}+q(-x)^{q-1},$$
$$\cdots,$$
$$f^{([q])}(x)=q(q-1)\cdots(q-[q]+1)\left\{(1+x)^{q-[q]}-1-(-1)^{[q]}(-x)^{q-[q]}\right\},$$
and there still holds
$$f(0)=f^{\prime}(0)=\cdots=f^{([q])}(0)=0.$$
If $[q]$ is even, using the fundamental inequality
$$(1+x)^\alpha\leq 1+(-x)^\alpha,~~0<\alpha<1,~~-1\leq x\leq 0$$
we know $f^{([q])}(x)\leq 0$. Then $f^{([q]-1)}(x)$ is decreasing on $[-1,0]$, which implies $f^{([q]-1)}(x)\geq f^{([q]-1)}(0)=0$.
Thus $f^{([q]-2)}(x)$ is increasing on $[-1,0]$, which means $f^{([q]-2)}(x)\leq f^{([q]-2)}(0)=0$. Repeat the deduction and we can obtain
$f^{(2k)}(x)\leq 0, f^{(2k+1)}(x)\geq 0$ when $k=0,1,\cdots,\frac{[q]}{2}$.

\medskip

Now let us deal with the case when $[q]$ is odd. Since the function
$$h(x)=(1+x)^\alpha-1+(-x)^{\alpha}~~(0<\alpha<0)$$ satisfy that
$h^{\prime}(x)=\alpha[(1+x)^{\alpha-1}-(-x)^{\alpha-1}]$ is nonnegative on $[-1,-1/2)$ and nonpositive on $(-1/2,0]$, then $f^{([q])}(x)$ is increasing on $[-1,-1/2)$ and decreasing on $(-1/2,0]$, which implies $f^{([q])}(x)\geq f^{([q])}(0)=f^{([q])}(-1)=0$. Thus $f^{([q]-1)}(x)$ is increasing on $[-1,0]$
which means $f^{([q]-1)}(x)\leq f^{([q]-1)}(0)=0,~~x\in[-1,0]$. Then $f^{([q]-2)}(x)$ is decreasing on $[-1,0]$, which means $f^{([q]-2)}(x)\geq f^{[q]-2}(0)=0$. Repeat the deduction and we can obtain
$f^{(2k)}(x)\leq 0, f^{(2k+1)}(x)\geq 0$ when $k=0,1,\cdots,\frac{[q]-1}{2}$, which complete the proof.
\end{proof}

Now let us prove Lemma ~\ref{expansion}.
Here we only state the proof when $2^\ast_s$ is not an integer. By Proposition 1, we derive that
$$(u+r)^{2^\ast_s}\leq u^{2^\ast_s}+\sum_{k=1}^{[2^\ast_s]}\frac{2^{\ast}_s(2^\ast_s-1)\cdots(2^{\ast}_s-k+1)}{k!}u^{2^\ast_s-k}r^k+|r|^{2^\ast_s},$$
thus
$$\int_{\mathbb{R}^n}(u+r)^{2^\ast_s}dx\leq \int_{\mathbb{R}^n}u^{2^\ast_s}dx+\sum_{k=1}^{[2^\ast_s]}\frac{2^\ast_s(2^\ast_s-1)\cdots(2^\ast_s-k+1)}{k!}\int_{\mathbb{R}^n}u^{2^\ast_s-k}r^kdx
\int_{\mathbb{R}^n}|r|^{2^\ast_s}dx.$$
Using the inequality $(1+x)^{\alpha}\leq 1+\alpha x$ for $x\geq -1$ and $0<\alpha<1$, we complete the proof of Lemma \ref{expansion}.

\section{Stability of the Hardy-Littlewood-Sobolev inequality}
In this section, we will set up the stability of the Hardy-Littlewood-Sobolev inequality with explicit constant. First we will establish the
local version of the stability of the fractional Sobolev inequality for nonnegative functions. Then, using the dual method from \cite{Car} by Carlen, we can deduce the local stability for the HLS inequality for nonnegative function (where the aim function is close to the manifold of the HLS optimizers). When the aim function is positive and away from the manifold of HLS optimizers, we will handle them by the competing symmetries and Block's flow. At last, we will deal with the stability of HLS inequality when the aim function is not necessarily non-negative.

\subsection{Local stability of fractional Sobolev inequality}
In the subsection we will set up the local stability of fractional Sobolev inequality with explicit lower bounds (Lemma~\ref{local Sobolev stability}) for $0<s<\frac{n}{2}$, while the case $s=1$ was proved in
\cite{DEFFL}. Without loss of generality, we only give the proof when $2^{*}_s$ is not an integer.

\vskip0.5cm

Let $0\leq f\in \dot{H}^s(\mathbb{R}^n)$.
It is well known that there exists an $0\leq u\in M_{S}$ such that $\|(-\Delta)^{s/2}(f-u)\|_2^2=\inf\limits_{h\in M_S}\|(-\Delta)^{s/2}(f-h)\|_2^2.$ Then  $r=f-u$ satisfies
$$\int_{\mathbb{R}^n}[(-\Delta)^{s/2}u][(-\Delta)^{s/2}r]dx=0,$$ and
$$\int_{\mathbb{R}^n}u^{2^\ast_s-1}rdx=0,$$
since $(-\Delta)^{s}u=c_u u^{2^\ast_s-1}$ with $c_u=\frac{\|(-\Delta)^{s/2}u\|_2^2}{\|u\|_{2^\ast_s}^{2}}$.
Then
\begin{equation}\label{Sob deficit}
\|(-\Delta)^{s/2}f\|_2^2-S^{-1}_{n,s}\|f\|^2_{2^{\ast}_s}=\|(-\Delta)^{s/2}u\|_2^2+\|(-\Delta)^{s/2}r\|^2_2-S^{-1}_{n,s}\|u+r\|^{2}_{2^\ast_s}.
\end{equation}

Next we will estimate the Sobolev deficit $\|(-\Delta)^{s/2}f\|_2^2-S_{n,s}\|f\|^2_{2^{\ast}_s}$ by the expansion from Lemma \ref{expansion} and the following spectral gap inequalities which are due to Rey~\cite{Re} (Eq. (D.1)) and Esposito~\cite{Es} (Lemma 2.1) for $s=1$, to De Nitti and K\"{o}nig ~\cite{DK} (Prop. 3.4)
for $0<s<n/2$.
\begin{lemma}\label{spectral gap inequality}
Given $f\in \dot{H}^s(\mathbb{R}^n)$. Let $u\in M_{s}$ such that
$$\|(-\Delta)^{s/2}(f-u)\|_2^2=\inf\limits_{h\in M_s}\|(-\Delta)^{s/2}(f-h)\|_2^2.$$
Then $r=f-u$ satisfies
$$\|(-\Delta)^{s/2}r\|_2^2-(2^\ast_s-1)S^{-1}_{n,s}\|u\|_{2^\ast_s}^{2-2^\ast_s}\int_{\mathbb{R}^n}u^{2^\ast_s-2}r^2dx\geq \frac{4s}{n+2s+2}\|(-\Delta)^{s/2}r\|_2^2.$$
\end{lemma}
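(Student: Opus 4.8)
The plan is to prove the spectral gap inequality of Lemma~\ref{spectral gap inequality} by reducing the quadratic form on the left-hand side to a sum over spherical harmonics (in the stereographically projected picture on $\mathbb{S}^n$), identifying the relevant eigenvalues of the fractional Laplacian, and then using the orthogonality constraints satisfied by $r$. First I would recall that the optimizer $u\in M_s$ solves $(-\Delta)^s u = c_u\, u^{2^\ast_s-1}$, so that the operator appearing on the left is a compact perturbation of $(-\Delta)^s$, namely the weighted operator $r\mapsto (-\Delta)^s r - (2^\ast_s-1)c_u\, u^{2^\ast_s-2} r$. Conjugating by a suitable power of $u$ (equivalently, passing to the conformally equivalent problem on the round sphere via the Emden--Fowler/stereographic change of variables), the eigenvalue problem for this operator becomes the spectral problem for $P_{2s} - (2^\ast_s-1)\lambda_0$ acting on $\dot H^s(\mathbb{S}^n)$, where $P_{2s}$ is the GJMS-type intertwining operator whose eigenvalues on degree-$k$ spherical harmonics are the explicit ratios of Gamma functions $\frac{\Gamma(\frac n2 + s + k)}{\Gamma(\frac n2 - s + k)}$.

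Next I would list the relevant eigenvalues. On constant functions ($k=0$) the eigenvalue of the normalized operator is $1$; on the $(n+1)$-dimensional space of first spherical harmonics ($k=1$) it is also, after the shift, exactly $0$; and on the degree-$2$ harmonics ($k=2$) it is strictly negative, with the precise gap giving the constant $\frac{4s}{n+2s+2}$. Concretely, the degree-$2$ eigenvalue of $(-\Delta)^s$ normalized against $c_u$ works out to $\frac{(\frac n2+s+1)(\frac n2-s)}{(\frac n2-s+1)(\frac n2+s)}$ or an equivalent expression, and subtracting the threshold $(2^\ast_s-1)$ multiple of the constant-mode eigenvalue produces a quantity one computes to be $-\frac{4s}{n+2s+2}$ of the full norm. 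The point is that the constraints $\int (-\Delta)^{s/2}u\,(-\Delta)^{s/2}r=0$ and $\int u^{2^\ast_s-1} r = 0$, together with the fact that $u$ is a minimizer of the distance to $M_s$ (so that $r$ is also $L^2(u^{2^\ast_s-2}dx)$-orthogonal to the full tangent space of $M_s$ at $u$, which is spanned precisely by the $k=0$ and $k=1$ modes — translations, dilation, and the scaling direction), forces the expansion of $r$ to live in the span of spherical harmonics of degree $\ge 2$. On that subspace the normalized operator $(-\Delta)^s r - (2^\ast_s-1)c_u u^{2^\ast_s-2}r$ is bounded below by $\frac{4s}{n+2s+2}(-\Delta)^s$, which is the claimed inequality.

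Concretely I would organize the write-up as: (i) reduce to $u$ being the standard bubble by conformal invariance of every term; (ii) record that $T_u M_s = \mathrm{span}\{u^{2^\ast_s}, x_i u^{2^\ast_s-?}, \dots\}$ corresponds under the stereographic projection to the $k=0,1$ eigenspaces; (iii) diagonalize $(-\Delta)^s$ and the multiplication operator simultaneously in this basis using the known spectrum of $P_{2s}$ on $\mathbb{S}^n$; (iv) verify the arithmetic identity that the $k=2$ eigenvalue of the shifted operator equals $\frac{4s}{n+2s+2}$ times the $k=2$ eigenvalue of $(-\Delta)^s$, and that all higher $k$ give an even larger ratio (monotonicity of $k\mapsto$ eigenvalue ratio, which follows from log-convexity of the Gamma function); (v) conclude by Parseval.

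The main obstacle I expect is step (iv) together with the monotonicity claim in step (v): one must be careful that the gap constant $\frac{4s}{n+2s+2}$ is attained exactly at $k=2$ and that the ratio $\frac{\mu_k - (2^\ast_s-1)\mu_0}{\mu_k}$ (with $\mu_k$ the eigenvalue of $(-\Delta)^s$ on degree-$k$ harmonics under the conformal normalization) is increasing in $k$ for $k\ge 2$ — this reduces to an inequality among products of Gamma-function ratios which, while elementary, is the delicate computational heart of the argument. Fortunately this is exactly the content established by De Nitti and K\"onig \cite{DK} (Prop.~3.4) in the fractional range and by Rey \cite{Re} and Esposito \cite{Es} for $s=1$, so in the paper I would cite those references for the spectral computation and only sketch the reduction to them; the remaining work is simply to match the orthogonality conditions coming from the distance minimization with the eigenspace decomposition, which is routine.
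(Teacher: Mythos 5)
Your proposal matches the paper's treatment: the paper does not prove this lemma itself but cites it as Eq.\ (D.1) of Rey and Lemma 2.1 of Esposito for $s=1$ and Prop.\ 3.4 of De Nitti--K\"onig for general $0<s<n/2$ --- exactly the references you invoke --- and your sketch of the spherical-harmonic/intertwining-operator argument (tangent space of $M_s$ $=$ the $k=0,1$ modes, gap attained at $k=2$, value $\tfrac{4s}{n+2s+2}$) is precisely the standard proof behind those citations. Two small slips in your description: the shifted operator on degree-$k\ge 2$ harmonics is strictly \emph{positive}, not negative, since $\mu_k>\mu_1=(2^\ast_s-1)\mu_0$; and the monotonicity in $k$ of the ratio $1-(2^\ast_s-1)\mu_0/\mu_k$ is immediate from $\mu_{k+1}/\mu_k=\bigl(\tfrac n2+s+k\bigr)/\bigl(\tfrac n2-s+k\bigr)>1$, so no delicate Gamma-function inequality is needed.
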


\vskip0.3cm

Now by (\ref{Sob deficit}) and Lemma \ref{expansion},  we have
\begin{align}\label{est of sob def}\nonumber
&\|(-\Delta)^{s/2}f\|_2^2-S^{-1}_{n,s}\|f\|^2_{2^{\ast}_s}\geq\|(-\Delta)^{s/2}r\|_2^2-(2^\ast_s-1)S_{n,s}\|u\|_{2^\ast_s}^{2-2^\ast_s}\int_{\mathbb{R}^n}u^{2^\ast_s-2}r^2dx\\
& -S_{n,s}\left\{\frac{2}{2^\ast_s}\sum_{k=3}^{[2^\ast_s]}\frac{2^{\ast}_s(2^\ast_s-1)\cdots(2^\ast_s-k+1)}{k!}\|u\|_{2^\ast_s}^{2-2^\ast_s}
\int_{\mathbb{R}^n}u^{2^\ast_s-k}r^kdx+\frac{2}{2^\ast_s}\|u\|_{2^\ast_s}^{2-2^\ast_s}\|r\|^{2^\ast_s}_{2^\ast_s}\right\}.
\end{align}
By H\"{o}lder's inequality and sharp Sobolev inequality, there holds
\begin{align*}
\|u\|_{2^\ast_s}^{2-2^\ast_s}\int_{\mathbb{R}^n}u^{2^\ast_s-k}r^kdx\leq \|u\|_{2^\ast_s}^{2-k}\|r\|^k_{2^\ast_s}
\leq S_{n,s}\left(\frac{\|r\|_{2^\ast_s}}{\|u\|_{2^\ast_s}}\right)^{k-2}\|(-\Delta)^{s/2}r\|_{2}^2,
\end{align*}
and
$$\|u\|_{2^\ast_s}^{2-2^\ast_s}\|r\|^{2^\ast_s}_{2^\ast_s}\leq S_{n,s}\left(\frac{\|r\|_{2^\ast_s}}{\|u\|_{2^\ast_s}}\right)^{2_s^\ast-2}\|(-\Delta)^{s/2}r\|_{2}^2.$$
Therefore combining Lemma \ref{spectral gap inequality}, (\ref{est of sob def}) and the above estimates, we get
\begin{align}\label{est of sta func}\nonumber
& \frac{\|(-\Delta)^{s/2}f\|_2^2-S^{-1}_{n,s}\|f\|^2_{2^{\ast}_s}}{\|(-\Delta)^{s/2}r\|_{2}^2}\\
& \geq\left\{\frac{4s}{n+2s+2}-
\frac{2}{2^\ast_s}\sum_{k=3}^{[2^\ast_s]}\frac{2^{\ast}_s(2^\ast_s-1)\cdots(2^\ast_s-k+1)}{k!}\left(\frac{\|r\|_{2^\ast_s}}{\|u\|_{2^\ast_s}}\right)^{k-2}-
\frac{2}{2^\ast_s}\left(\frac{\|r\|_{2^\ast_s}}{\|u\|_{2^\ast_s}}\right)^{2^\ast_s-2}\right\}.
\end{align}

Now we are in the position to prove the local stability of Sobolev inequality for nonnegative functions. Let  $l(\delta)=\sqrt{\frac{\delta}{1-\delta}}$ and
recall
$$\nu(\delta)=\inf\left\{\mathcal{S}_{S}(f):0\leq f\in \dot{H}^s(\mathbb{R}^n)\setminus M_s,\inf_{g\in M_s}\|(-\Delta)^{s/2}(f-g)\|_2^2\leq \delta\|(-\Delta)^{s/2}f\|_2^2\right\},$$
and
$$m(\delta)=\frac{4s}{n+2s+2}-
\frac{2}{2^\ast_s}\sum_{k=3}^{2^\ast_s}\frac{2^{\ast}_s(2^\ast_s-1)\cdots(2^\ast_s-k+1)}{k!}l(\delta)^{k-2},~~\text{when}~~2^\ast_s~~\text{is an integer}, $$
otherwise
$$m(\delta)=\frac{4s}{n+2s+2}-
\frac{2}{2^\ast}\sum_{k=3}^{[2^\ast_s]}\frac{2^{\ast}_s(2^\ast_s-1)\cdots(2^\ast-k+1)}{k!}l(\delta)^{k-2}-
\frac{2}{2^\ast_s}l(\delta)^{2^\ast_s-2}.$$
\begin{lemma}\label{local Sobolev stability}
With the above notations, we have $\nu(\delta)\geq m(\delta)$.
\end{lemma}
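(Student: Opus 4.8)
The plan is to feed the size constraint defining $\nu(\delta)$ into the pointwise bound \eqref{est of sta func}, which was just derived, after observing that its right-hand side is monotone in the relevant ratio. Since the integer case for $2^\ast_s$ is handled the same way, assume $2^\ast_s\notin\mathbb Z$. First I would fix an admissible competitor, i.e. $0\le f\in\dot H^s(\mathbb R^n)\setminus M_s$ with $\inf_{g\in M_s}\|(-\Delta)^{s/2}(f-g)\|_2^2\le\delta\|(-\Delta)^{s/2}f\|_2^2$, choose a \emph{non-negative} minimizer $u\in M_s$, and put $r=f-u$. One has $r\neq 0$ (otherwise $f\in M_s$) and $u\neq 0$ (otherwise the constraint, with $\delta<1$, forces $f=0$), so $t:=\|r\|_{2^\ast_s}/\|u\|_{2^\ast_s}$ is a well-defined positive number. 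Since $u\ge 0$ and $u+r=f\ge 0$ the Taylor expansion of Lemma~\ref{expansion} applies, and the orthogonality relations together with Lemma~\ref{spectral gap inequality} hold for the projection $u$; hence the whole derivation of \eqref{est of sta func} goes through and gives $\mathcal S_S(f)\ge\varphi(t)$, where $\varphi(t)=\frac{4s}{n+2s+2}-\frac{2}{2^\ast_s}\sum_{k=3}^{[2^\ast_s]}\frac{2^\ast_s(2^\ast_s-1)\cdots(2^\ast_s-k+1)}{k!}t^{k-2}-\frac{2}{2^\ast_s}t^{2^\ast_s-2}$ is exactly the right side of \eqref{est of sta func}, and $\varphi(l(\delta))=m(\delta)$ by the definition of $m$. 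Moreover $\varphi$ is non-increasing on $[0,\infty)$: each coefficient $\frac{2^\ast_s(2^\ast_s-1)\cdots(2^\ast_s-k+1)}{k!}$ with $3\le k\le[2^\ast_s]$ is positive (since $k\le[2^\ast_s]<2^\ast_s$), and $t\mapsto t^{k-2}$ for $k\ge 3$ and $t\mapsto t^{2^\ast_s-2}$ for $2^\ast_s>2$ are non-decreasing; if $2<2^\ast_s<3$ the sum is empty and only the last term survives, with the same conclusion. Thus it remains only to prove $t\le l(\delta)$.

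The substantive step is exactly this last one, since the constraint is phrased in the $\dot H^s$-seminorm whereas $t$ is a ratio of $L^{2^\ast_s}$-norms. I would use the orthogonality $\|(-\Delta)^{s/2}f\|_2^2=\|(-\Delta)^{s/2}u\|_2^2+\|(-\Delta)^{s/2}r\|_2^2$ to rewrite the constraint as $(1-\delta)\|(-\Delta)^{s/2}r\|_2^2\le\delta\|(-\Delta)^{s/2}u\|_2^2$, that is $\|(-\Delta)^{s/2}r\|_2^2/\|(-\Delta)^{s/2}u\|_2^2\le\delta/(1-\delta)$. Then I would bound the numerator of $t^2$ by the sharp fractional Sobolev inequality, $\|r\|_{2^\ast_s}^2\le S_{n,s}\|(-\Delta)^{s/2}r\|_2^2$, and use the extremality identity $\|u\|_{2^\ast_s}^2=S_{n,s}\|(-\Delta)^{s/2}u\|_2^2$ for the denominator; the constants $S_{n,s}$ cancel and $t^2\le\|(-\Delta)^{s/2}r\|_2^2/\|(-\Delta)^{s/2}u\|_2^2\le\delta/(1-\delta)=l(\delta)^2$, i.e. $t\le l(\delta)$. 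Combining with the monotonicity of $\varphi$, $\mathcal S_S(f)\ge\varphi(t)\ge\varphi(l(\delta))=m(\delta)$, and taking the infimum over all admissible $f$ yields $\nu(\delta)\ge m(\delta)$.

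I do not expect a serious obstacle: all the analytically heavy input (the refined expansion near the Aubin--Talenti bubbles and the spectral gap inequality of Lemma~\ref{spectral gap inequality}) has already been absorbed into \eqref{est of sta func}, so what is left is essentially bookkeeping plus the norm-change of the previous paragraph, which is the only place the Sobolev inequality and the extremality of $u$ enter. The one point that deserves a word of justification is that for $0\le f$ one may take the $\dot H^s$-projection $u$ onto $M_s$ to be non-negative --- this is what licenses Lemma~\ref{expansion} --- and I would cite it as a standard fact, as indicated in the text preceding this lemma.
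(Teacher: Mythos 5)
Your proposal is correct and follows essentially the same route as the paper: the paper's proof consists precisely of the chain $\|r\|_{2^\ast_s}/\|u\|_{2^\ast_s}\leq \|(-\Delta)^{s/2}r\|_2/\|(-\Delta)^{s/2}u\|_2\leq l(\delta)$ (via the Pythagorean identity, the sharp Sobolev inequality for $r$ and extremality of $u$), substituted into \eqref{est of sta func}. Your explicit verification that the right-hand side of \eqref{est of sta func} is non-increasing in the ratio is a detail the paper leaves implicit, but it does not change the argument.
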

\begin{proof}
Since $\|(-\Delta)^{s/2}f\|_2^2=\|(-\Delta)^{s/2}r\|_2^2+\|(-\Delta)^{s/2}u\|_2^2$, then
\begin{align}\label{est}
\frac{\|r\|_{2^\ast_s}}{\|u\|_{2^\ast_s}}\leq \frac{\|(-\Delta)^{s/2}r\|_2}{\|(-\Delta)^{s/2}u\|_2}=\frac{\|(-\Delta)^{s/2}r\|_2}{\|(-\Delta)^{s/2}f\|_2}
\frac{1}{\sqrt{1-\frac{\|(-\Delta)^{s/2}r\|^2_2}{\|(-\Delta)^{s/2}f\|^2_2}}}\leq \sqrt{\frac{\delta}{1-\delta}}=l(\delta).
\end{align}
Therefor by (\ref{est of sta func}) and (\ref{est})
$$\nu(\delta)\geq S_{n,s}\left\{\frac{4s}{n+2s+2}-
\frac{2}{2^\ast_s}\sum_{k=3}^{[2^\ast_s]}\frac{2^{\ast}_s(2^\ast_s-1)\cdots(2^\ast_s-k+1)}{k!}l(\delta)^{k-2}-
\frac{2}{2^\ast_s}l(\delta)^{2^\ast_s-2}\right\},$$
which completes the proof of local stability of fractional Sobolev inequality with explicit lower bounds.
\end{proof}

\subsection{Stability of HLS inequality for positive functions} In this subsection, we will consider the stability of HLS inequality for
positive functions. Choose $\delta$ small enough, and recall
$$\mu(\delta)=\inf\{\mathcal{S}_{HLS}(g):0\leq g \in L^{\frac{2n}{n+2s}}(\mathbb{R}^n)\setminus M_{HLS}, \inf_{h\in M_{HLS}}\|g-h\|^2_{\frac{2n}{n+2s}}\leq \delta\|g\|^2_{\frac{2n}{n+2s}}\}.$$
First we obtain the local stability of HLS inequality by the following dual lemma from Carlen~\cite{Car} and the local stability
of Sobolev inequality for nonnegative functions.
\begin{lemma}\label{local}
If $\mathcal{S}_{S}(f)\geq m(\delta)$ for all nonnegative function $f$ with
$$\inf\limits_{h\in M_s}\|(-\Delta)^{s/2}(f-h)\|^2_2\leq \delta\|(-\Delta)^{s/2}f\|_2^2,$$
then
$$\mathcal{S}_{HLS}(g)\geq \frac{1}{2}\frac{n-2s}{n+2s}\min\{m(\delta)\frac{n-2s}{n+2s},1\},$$
for all $0<g \in L^{\frac{2n}{n+2s}}(\mathbb{R}^n)\setminus M_{HLS}$ satisfying $\inf\limits_{h\in M_{HLS}}\|g-h\|^2_{\frac{2n}{n+2s}}\leq \frac{\delta}{2}\|g\|^2_{\frac{2n}{n+2s}}$.
\end{lemma}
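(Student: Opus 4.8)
\textbf{Proof proposal for Lemma~\ref{local}.}

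The plan is to use the dual characterization of the HLS and Sobolev deficits developed by Carlen in \cite{Car}, which converts a lower bound on the Sobolev stability functional for nonnegative functions into a lower bound on the HLS stability functional for positive functions. First I would fix a positive $g\in L^{\frac{2n}{n+2s}}(\mathbb{R}^n)\setminus M_{HLS}$ with $\inf_{h\in M_{HLS}}\|g-h\|^2_{\frac{2n}{n+2s}}\leq \frac{\delta}{2}\|g\|^2_{\frac{2n}{n+2s}}$, and set $u=(-\Delta)^{-s/2}(\text{something})$; more precisely I would introduce the function $f$ defined (up to normalization) by $f=S_{n,s}(-\Delta)^{-s}g$, or work with the pairing $\int g\,(-\Delta)^{-s}g$ directly via the Euler--Lagrange/Legendre-transform relation between $\|g\|_{\frac{2n}{n+2s}}^2$ and $\|(-\Delta)^{s/2}f\|_2^2$. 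The key algebraic identity, which I would quote from \cite{Car}, is that if $f$ realizes the supremum in the dual formulation of $\|(-\Delta)^{-s/2}g\|_2^2$, then $f\geq 0$ whenever $g\geq 0$, the distance of $g$ to $M_{HLS}$ in $L^{\frac{2n}{n+2s}}$ is comparable to the distance of $f$ to $M_s$ in $\dot H^s$, and the HLS deficit of $g$ controls (a multiple of) the Sobolev deficit of $f$ with a loss that is quantitative in the deficit itself.

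Concretely, the steps are: (i) translate the smallness hypothesis $\inf_{h\in M_{HLS}}\|g-h\|^2_{\frac{2n}{n+2s}}\leq \frac{\delta}{2}\|g\|^2_{\frac{2n}{n+2s}}$ into the hypothesis $\inf_{h\in M_s}\|(-\Delta)^{s/2}(f-h)\|_2^2\leq \delta\|(-\Delta)^{s/2}f\|_2^2$ required to apply the local Sobolev stability $\mathcal{S}_S(f)\geq m(\delta)$ — here the factor of $2$ between $\frac{\delta}{2}$ and $\delta$ is exactly the room Carlen's duality estimate needs, because the passage from the $L^{\frac{2n}{n+2s}}$-distance of $g$ to the $\dot H^s$-distance of $f$ is only bi-Lipschitz with a constant that degrades, and a crude bound gives a factor $2$ for $\delta$ small; (ii) apply Lemma~\ref{local Sobolev stability} (equivalently the hypothesis) to $f$ to get $S_{n,s}\|(-\Delta)^{s/2}f\|_2^2-\|f\|_{2^\ast_s}^2\geq m(\delta)\inf_{h\in M_s}\|(-\Delta)^{s/2}(f-h)\|_2^2$; (iii) run Carlen's duality inequality backwards: the quantitative convexity of the map $t\mapsto t^{2/2^\ast_s}$ (or of the relevant norm) turns the Sobolev deficit bound into $\|g\|^2_{\frac{2n}{n+2s}}-S_{n,s}^{-1}\|(-\Delta)^{-s/2}g\|_2^2 \geq c\,\min\{m(\delta)\tfrac{n-2s}{n+2s},1\}\inf_{h\in M_{HLS}}\|g-h\|^2_{\frac{2n}{n+2s}}$, where the $\min$ with $1$ appears because the convexity modulus saturates, and the factor $\frac{n-2s}{n+2s}$ is the ratio of the exponents $2^\ast_s$ and $\frac{2n}{n+2s}$ (i.e.\ $\frac{2}{2^\ast_s}=\frac{n-2s}{n}$ versus $\frac{2}{(2n)/(n+2s)}=\frac{n+2s}{n}$, whose quotient is $\frac{n-2s}{n+2s}$) that governs the Legendre duality; (iv) chase the constant $\frac12$ — it comes from a Young-type inequality $ab\leq \frac{a^2+b^2}{2}$ used once in the duality argument to separate the "good term" from the "cross term", exactly as in \cite{Car}.

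The main obstacle I expect is step (iii): making Carlen's duality estimate quantitative with explicit constants rather than merely qualitative. Carlen's original argument shows the HLS stability follows from Sobolev stability but does not track the constant; here one must re-examine the quantitative convexity inequality for $L^p$ norms (the "uniform convexity with modulus" statement, e.g.\ of the form $\|a\|_p^2 - \|b\|_p^2 - \tfrac{2}{p}\|b\|_p^{2-p}\langle |b|^{p-2}b, a-b\rangle \geq c_p \|a-b\|_p^2$ when $a-b$ is small relative to $b$) and pin down $c_p$ for $p=\frac{2n}{n+2s}\in(1,2)$, then feed it through the Legendre transform together with the normalization bookkeeping (the relation $c_u=\|(-\Delta)^{s/2}u\|_2^2/\|u\|_{2^\ast_s}^2$ and its HLS counterpart). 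The other delicate point is (i): ensuring the dual function $f$ associated to $g$ is genuinely nonnegative and that its distance to $M_s$ is controlled by the distance of $g$ to $M_{HLS}$ with the stated factor $2$ loss; this requires the observation that $g>0$ forces $f=(-\Delta)^{-s}g>0$ (positivity of the Riesz kernel) and a continuity/compactness argument for $\delta$ small. Everything else — the Hölder and sharp-Sobolev estimates, the triangle inequalities — is routine and parallels the $s=1$ treatment in \cite{DEFFL} combined with \cite{Car}.
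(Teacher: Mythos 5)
Your proposal follows the same route as the paper: reduce the local HLS stability to the local Sobolev stability via Carlen's duality. However, the paper's actual proof is much shorter than what you anticipate, because the ``main obstacle'' you identify in step (iii) does not exist: Lemma 3.4 of \cite{Car} is already fully quantitative, and the paper simply invokes it verbatim to read off the constant $\frac{1}{2}\frac{n-2s}{n+2s}\min\{m(\delta)\frac{n-2s}{n+2s},1\}$ in the relevant regime --- what is genuinely new in this paper is the explicit input $m(\delta)$ from Lemma \ref{local Sobolev stability} and the globalization in Lemma \ref{away}, not the duality step itself. Your reconstruction of the internals is nevertheless on the right track; in particular the factor $\frac{n-2s}{n+2s}$ is $p-1$ for $p=\frac{2n}{n+2s}$, the $2$-uniform convexity modulus of $L^p$ with $1<p\le 2$, which is consistent with your exponent bookkeeping. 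The one concrete step your outline omits, and which the paper does carry out, is the dichotomy on the size of the deficit: when $\|g\|^2_{\frac{2n}{n+2s}}\ge 2S_{n,s}^{-1}\|(-\Delta)^{-s/2}g\|_2^2$ the conclusion is immediate, since then the deficit is at least $\frac12\|g\|^2_{\frac{2n}{n+2s}}\ge\frac12\inf_{h\in M_{HLS}}\|g-h\|^2_{\frac{2n}{n+2s}}$ (as $0\in M_{HLS}$) and $\frac12$ dominates the claimed constant; Carlen's Lemma 3.4 is applied only in the complementary regime $\|g\|^2_{\frac{2n}{n+2s}}\le 2S_{n,s}^{-1}\|(-\Delta)^{-s/2}g\|_2^2$, where its hypotheses are satisfied. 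Without this case split your step (iii) would not cover all $g$ in the stated neighborhood; this is an easily repaired omission rather than a fatal gap.
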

\begin{proof}
If $\|g\|^2_{\frac{2n}{n+2s}}\geq 2S_{n,s}^{-1}\|(-\Delta)^{-s/2}g\|_2^2$, then
$$\|g\|^2_{\frac{2n}{n+2s}}-S_{n,s}^{-1}\|(-\Delta)^{-s/2}g\|_2^2\geq \frac{1}{2}\|g\|^2_{\frac{2n}{n+2s}}\geq \frac{1}{2}\inf\limits_{h\in M_{HLS}}\|g-h\|^2_{\frac{2n}{n+2s}},$$
thus $\mathcal{S}_{HLS}(g)\geq \frac{1}{2}$. When $\|g\|^2_{\frac{2n}{n+2s}}\leq 2S_{n,s}^{-1}\|(-\Delta)^{-s/2}g\|_2^2$, using Lemma 3.4 in \cite{Car},
we can obtain $\mathcal{S}_{HLS}(g)\geq \frac{1}{2}\frac{n-2s}{n+2s}\min\{m(\delta)\frac{n-2s}{n+2s},1\}.$ Since
$$\frac{1}{2}\frac{n-2s}{n+2s}\min\{m(\delta)\frac{n-2s}{n+2s},1\}\leq \frac{1}{2},$$ then
$$\mathcal{S}_{HLS}(g)\geq \frac{1}{2}\frac{n-2s}{n+2s}\min\{m(\delta)\frac{n-2s}{n+2s},1\}$$ if
$\inf\limits_{h\in M_{HLS}}\|g-h\|^2_{\frac{2n}{n+2s}}\leq \frac{\delta}{2}\|g\|^2_{\frac{2n}{n+2s}}$. This accomplishes the proof of Lemma \ref{local}.

\end{proof}

Next, let us handle the stability of HLS inequality when the nonnegative function $g$ satisfies
$\inf\limits_{h\in M_{HLS}}\|g-h\|^2_{\frac{2n}{n+2s}}> \delta\|g\|^2_{\frac{2n}{n+2s}}$. We emphasize here that the idea of the proof is much inspired by the work \cite{DEFFL}. Dolbeault, Esteban, Figalli, Frank and Loss in \cite{DEFFL} applied the technique of rearrangement flow for $L^2$ integral of gradient $u$ to establish the relation between local stability and global stability of Sobolev inequality for the first order derivative.
However, such a relationship cannot be extended to the higher and fractional order of derivatives.  To this end, we will establish the relation between local stability and global stability of HLS inequality by using the technique of the rearrangement flow for HLS integral.
\begin{lemma}\label{away}
For any $0\leq g \in L^{\frac{2n}{n+2s}}(\mathbb{R}^n)$ satisfying $\inf\limits_{h\in M_{HLS}}\|g-h\|^2_{\frac{2n}{n+2s}}> \delta\|g\|^2_{\frac{2n}{n+2s}}$, there holds
$$\mathcal{S}_{HLS}(g)\geq \delta \mu(\delta).$$
\end{lemma}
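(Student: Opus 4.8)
The plan is to adapt, to the HLS setting, the Dolbeault--Esteban--Figalli--Frank--Loss device relating local and global stability, using the continuous rearrangement flow for the HLS integral in place of the one for $\|\nabla u\|_2^2$. Write $p=\frac{2n}{n+2s}$ and, for $0\le\phi\in L^p(\mathbb R^n)$, set
$$D(\phi)=\|\phi\|_p^2-S_{n,s}^{-1}\|(-\Delta)^{-s/2}\phi\|_2^2\ \ge 0,\qquad \rho(\phi)=\frac{\inf_{h\in M_{HLS}}\|\phi-h\|_p^2}{\|\phi\|_p^2},$$
so that $\mathcal S_{HLS}(\phi)=D(\phi)/(\rho(\phi)\|\phi\|_p^2)$ and $\mu(\delta)=\inf\{\mathcal S_{HLS}(\phi):0\le\phi\in L^p\setminus M_{HLS},\ \rho(\phi)\le\delta\}$. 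The hypothesis reads $\rho(g)>\delta$ (so $g\neq0$ and $g\notin M_{HLS}$), and the goal is $\mathcal S_{HLS}(g)\ge\delta\,\mu(\delta)$.

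First I would record the structural facts that make the argument run. Each of $\mathcal R$ (symmetric decreasing rearrangement), $U$, and the rearrangement flow $\phi\mapsto\phi_\tau$ preserves non-negativity and the $L^p$ norm. Since $\|(-\Delta)^{-s/2}\phi\|_2^2$ equals, up to a positive constant, the HLS integral $\iint\phi(x)|x-y|^{-(n-2s)}\phi(y)\,dx\,dy$, the Riesz rearrangement inequality gives $D(\mathcal R\phi)\le D(\phi)$, and its continuous (Brock-flow) version \cite{Br1,Br2,DEFFL} gives that $\tau\mapsto D(\phi_\tau)$ is non-increasing. The transformation $U$ is a conformal change of variables carrying the Jacobian weight $\big(\tfrac{2}{|x-e_n|^2}\big)^{n/p}$ of the underlying M\"obius map, hence leaves both $\|\phi\|_p$ and the HLS integral invariant, so $D(U\phi)=D(\phi)$; it also satisfies $U(M_{HLS})=M_{HLS}$ and $\|U\phi-U\psi\|_p=\|\phi-\psi\|_p$ (change of variables), whence $\rho(U\phi)=\rho(\phi)$. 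Finally, the Carlen--Loss competing symmetry theorem (Theorem~\ref{compete sym}) gives $(\mathcal R U)^k g\to c_0\big(\tfrac{2}{1+|x|^2}\big)^{\frac{n+2s}{2}}\in M_{HLS}$ in $L^p$, so $\rho\big((\mathcal R U)^k g\big)\to 0$.

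Then I would stop a suitable flow precisely on the level set $\{\rho=\delta\}$. Since $\rho((\mathcal R U)^kg)\to 0<\delta<\rho(g)$, pick the smallest $K\ge1$ with $\rho((\mathcal R U)^Kg)\le\delta$, so that $\rho((\mathcal R U)^{K-1}g)>\delta$. Put $\phi:=U(\mathcal R U)^{K-1}g\ge0$; then $\|\phi\|_p=\|g\|_p$, $D(\phi)=D((\mathcal R U)^{K-1}g)\le D(g)$ (iterating the monotonicity/invariance facts), $\rho(\phi)=\rho((\mathcal R U)^{K-1}g)>\delta$, and $\phi^\ast=\mathcal R\phi=(\mathcal R U)^Kg$, hence $\rho(\phi^\ast)\le\delta$. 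Running the rearrangement flow $\tau\mapsto\phi_\tau$ from $\phi_0=\phi$ to $\phi_\infty=\phi^\ast$, Lemma~\ref{continous} makes $\tau\mapsto\rho(\phi_\tau)$ continuous on $[0,\infty]$; it starts $>\delta$ and ends $\le\delta$, so by the intermediate value theorem there is $\tau_\ast$ with $\rho(\phi_{\tau_\ast})=\delta$. Set $g_\ast:=\phi_{\tau_\ast}$: then $g_\ast\ge0$, $\|g_\ast\|_p=\|g\|_p$, $D(g_\ast)\le D(\phi)\le D(g)$, and $\rho(g_\ast)=\delta>0$ makes $g_\ast\in L^p\setminus M_{HLS}$ admissible for $\mu(\delta)$, so $\mathcal S_{HLS}(g_\ast)\ge\mu(\delta)$, i.e.\ $D(g_\ast)\ge\mu(\delta)\,\delta\,\|g_\ast\|_p^2=\mu(\delta)\,\delta\,\|g\|_p^2$. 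Using $D(g)\ge D(g_\ast)$ together with $\inf_{h\in M_{HLS}}\|g-h\|_p^2\le\|g-0\|_p^2=\|g\|_p^2$ (as $0\in M_{HLS}$), I conclude $\mathcal S_{HLS}(g)=D(g)/\inf_{h\in M_{HLS}}\|g-h\|_p^2\ge D(g)/\|g\|_p^2\ge\delta\,\mu(\delta)$.

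The crux, and the only genuine difficulty, is arranging the interpolation so that the flow can be halted exactly when $\rho$ reaches the threshold $\delta$ — this is what the continuity Lemma~\ref{continous} together with the intermediate value theorem supply — plus the bookkeeping that $U$ leaves the HLS deficit $D$ and the relative distance $\rho$ unchanged (conformal invariance of the HLS integral, the $L^p$-isometry property of $U$, and $U(M_{HLS})=M_{HLS}$) while the rearrangement flow can only decrease $D$. Everything else (admissibility of $g_\ast$ in the definition of $\mu(\delta)$, the trivial upper bound on the denominator via $0\in M_{HLS}$, and the norm bookkeeping) is routine.
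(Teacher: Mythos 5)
Your proposal is correct and follows essentially the same route as the paper: use the Carlen--Loss competing symmetries to locate the last iterate with $\rho>\delta$, apply $U$ once more so that the Brock continuous rearrangement flow interpolates to the next iterate, stop the flow on the level set $\{\rho=\delta\}$ via Lemma~\ref{continous} and the intermediate value theorem, and conclude using the monotonicity of the HLS deficit under the flow together with the trivial bound $\inf_{h\in M_{HLS}}\|g-h\|_{\frac{2n}{n+2s}}^2\le\|g\|_{\frac{2n}{n+2s}}^2$. Your write-up is, if anything, more explicit than the paper's about the invariance of $D$ and $\rho$ under $U$, which the paper uses implicitly.
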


\begin{proof}
Assume that $0\leq g \in L^{\frac{2n}{n+2s}}(\mathbb{R}^n)$ satisfying $\inf\limits_{h\in M}\|g-h\|^2_{\frac{2n}{n+2s}}> \delta\|g\|^2_{\frac{2n}{n+2s}}$. Let $g_k=(\mathcal{R}U)^kg$ (see the definition of $\mathcal{R}U$ in section 2). By Theorem \ref{compete sym} we know
$$\|g_k-h_g\|_{\frac{2n}{n+2s}}\rightarrow 0,~~k\rightarrow \infty,$$
where $h_g=\|g\|_{\frac{2n}{n+2s}}|\mathbb{S}^n|^{-\frac{n-2s}{2n}}(\frac{2}{1+|x|^2})^{\frac{n+2s}{2}}\in M_{HLS}$. It is well known that $$k\rightarrow\|(-\Delta)^{-\frac{s}{2}}g_k\|^2_2$$
is increasing by Riesz rearrangement inequality and $\|g_k\|_{\frac{2n}{n+2s}}=\|g\|_{\frac{2n}{n+2s}}$.
Thus
\begin{align}\label{est of hls}\nonumber
& \mathcal{S}_{HLS}(g)\geq \frac{\|g\|^2_{\frac{2n}{n+2s}}-S_{n,s}^{-1}\|(-\Delta)^{-s/2}g\|_2^2}{\|g\|^2_{\frac{2n}{n+2s}}}\\
&\geq  1-\frac{S_{n,s}^{-1}\|(-\Delta)^{-s/2}g\|_2^2}{\|g\|^2_{\frac{2n}{n+2s}}}\geq \frac{\|g_k\|^2_{\frac{2n}{n+2s}}-S_{n,s}^{-1}\|(-\Delta)^{-s/2}g_k\|_2^2}{\|g_k\|^2_{\frac{2n}{n+2s}}}.
\end{align}
Since $\|g_k-h_g\|_{\frac{2n}{n+2s}}\rightarrow 0$ as $k\rightarrow \infty$ and $h_g\in M_{HLS}$, then there exist a $k_0\in \mathbb{N}$ such that
 $$\inf\limits_{h\in M_{HLS}}\|g_{k_0}-h\|^2_{\frac{2n}{n+2s}}>\delta\|g_{k_0}\|^2_{\frac{2n}{n+2s}}$$
  and
 $$\inf\limits_{h\in M_{HLS}}\|g_{k_0+1}-h\|^2_{\frac{2n}{n+2s}}\leq \delta\|g_{k_0+1}\|^2_{\frac{2n}{n+2s}}.$$
 Denote $g_0=Ug_{k_0}$, $g_\infty=g_{k_0+1}$, then
 $$\inf\limits_{h\in M_{HLS}}\|g_{0}-h\|^2_{\frac{2n}{n+2s}}=\inf\limits_{h\in M_{HLS}}\|g_{k_0}-h\|^2_{\frac{2n}{n+2s}}
 >\delta\|g_{k_0}\|^2_{\frac{2n}{n+2s}}=\delta\|g_{0}\|^2_{\frac{2n}{n+2s}}.$$
 Now using the continuous rearrangement flow $g_\tau$ ($0\leq\tau\leq\infty$) introduced in section 2, we conclude that $g_\tau$
 satisfies
$$\|(-\Delta)^{-\frac{s}{2}}g_\tau\|_{2}^2\geq \|(-\Delta)^{-\frac{s}{2}}g_0\|_{2}^2=\|(-\Delta)^{-\frac{s}{2}}g_{k_0}\|_{2}^2, ~\text{and}~\|g_\tau\|_{\frac{2n}{n+2s}}=\|g_0\|_{\frac{2n}{n+2s}}=\|g\|_{\frac{2n}{n+2s}}.$$
Since $\tau\rightarrow\inf\limits_{h\in M_{HLS}}\|g_\tau-h\|^2_{\frac{2n}{n+2s}}$ is continuous by lemma \ref{continous}, then there exists $\tau_0\in (0,\infty)$ such that
\begin{align}\label{equality}
\inf\limits_{h\in M_{HLS}}\|g_{\tau_0}-h\|^2_{\frac{2n}{n+2s}}=\delta\|g_{\tau_0}\|^2_{\frac{2n}{n+2s}}.
\end{align}
Therefore by (\ref{est of hls}) and (\ref{equality}),
\begin{align*}
\mathcal{S}_{HLS}(g)&\geq \frac{\|g_0\|^2_{\frac{2n}{n+2s}}-S_{n,s}^{-1}\|(-\Delta)^{-s/2}g_0\|_2^2}{\|g_0\|^2_{\frac{2n}{n+2s}}}\\
&\geq \frac{\|g_{\tau_0}\|^2_{\frac{2n}{n+2s}}-S_{n,s}^{-1}\|(-\Delta)^{-s/2}g_{\tau_0}\|_2^2}{\|g_{\tau_0}\|^2_{\frac{2n}{n+2s}}}\\
&=\delta \frac{\|g_{\tau_0}\|^2_{\frac{2n}{n+2s}}-S_{n,s}^{-1}\|(-\Delta)^{-s/2}g_{\tau_0}\|_2^2}{\inf\limits_{h\in M_{HLS}}\|g_{\tau_0}-h\|^2_{\frac{2n}{n+2s}}}
\geq \delta\mu(\delta),
\end{align*}
where in the second inequality we use the following Riesz's inequality for continuous convex rearrangement (see Appendix A in \cite{DEFFL}):
For nonnegative functions $f,g$, there holds $$\iint_{\mathbb{R}^n\times \mathbb{R}^n}\frac{f_\tau(x)g_\tau(y)}{|x-y|^{n-2s}}dxdy\geq \iint_{\mathbb{R}^n\times \mathbb{R}^n}\frac{f(x)g(y)}{|x-y|^{n-2s}}dxdy.$$
This proves Lemma \ref{away}.
\end{proof}

According to the definition of $\mu(\delta)$, through Lemma \ref{local}, we deduce that $$\mu(\delta)\geq \frac{1}{2}\frac{n-2s}{n+2s}\min\{m(2\delta)\frac{n-2s}{n+2s},1\}.$$

Combining this and Lemma \ref{away}, we derive that for any $0\leq f\in L^{\frac{2n}{n+2s}}(\mathbb{R}^n)\setminus M_{HLS}$, there holds $$\mathcal{S}_{HLS}(g)\geq \frac{\delta}{2}\frac{n-2s}{n+2s}\min\{m(2\delta)\frac{n-2s}{n+2s},1\}.$$
Therefor we have established the stability of HLS inequality with explicit lower bounds for nonnegative functions.

\subsection{Stability of HLS inequality with explicit lower bounds}
In this subsection, we will prove the stability of HLS inequality with explicit lower bounds for general function $f\in L^{\frac{2n}{n+2s}}(\mathbb{R}^n)\setminus M_{HLS}$, namely we shall give the proof of Theorem \ref{thm1}. Let us denote by $C_{HLS}$ the optimal constant for stability of HLS inequality and denote by $C^{pos}_{HLS}$ the optimal constant in (\ref{sta hls func}) when restricted to nonnegative functions. In the spirit of the work by Dolbeault et al. in \cite{DEFFL}, we establish the relationship between these two optimal constants.

\begin{lemma}\label{constant}
$$C_{HLS}\geq \frac{1}{2}\min\{C^{pos}_{HLS},\min\{2^{\frac{n+2s}{n}}-2,1\}\}.$$
\end{lemma}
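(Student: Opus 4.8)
The plan is to reduce the general (sign-changing) case to the nonnegative case by splitting $g = g^+ - g^-$ and exploiting the elementary identity $\|g\|_{\frac{2n}{n+2s}}^{\frac{2n}{n+2s}} = \|g^+\|_{\frac{2n}{n+2s}}^{\frac{2n}{n+2s}} + \|g^-\|_{\frac{2n}{n+2s}}^{\frac{2n}{n+2s}}$, which is the substitute here for the gradient identity $\|\nabla u\|_2^2 = \|\nabla u^+\|_2^2 + \|\nabla u^-\|_2^2$ used in \cite{DEFFL}. First I would dispose of the easy regime: if the HLS deficit $\|g\|_{\frac{2n}{n+2s}}^2 - S_{n,s}^{-1}\|(-\Delta)^{-s/2}g\|_2^2$ is already comparable to $\|g\|_{\frac{2n}{n+2s}}^2$ — say at least $c\|g\|_{\frac{2n}{n+2s}}^2$ for a suitable absolute constant — then since $\inf_{h\in M_{HLS}}\|g-h\|_{\frac{2n}{n+2s}}^2 \le \|g\|_{\frac{2n}{n+2s}}^2$ (take $h=0$), we get $\mathcal{S}_{HLS}(g) \ge c$ immediately, with no positivity needed. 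So the real work is when $g$ is \emph{close} (in the deficit sense) to the HLS manifold.

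In that regime, the idea is that $(-\Delta)^{-s/2}$ has a positivity-improving kernel $|x-y|^{-(n-2s)}$, so
$$\|(-\Delta)^{-s/2}g\|_2^2 = \iint \frac{g(x)g(y)}{|x-y|^{n-2s}}\,dx\,dy \le \iint \frac{g^+(x)g^+(y) + g^-(x)g^-(y)}{|x-y|^{n-2s}}\,dx\,dy = \|(-\Delta)^{-s/2}g^+\|_2^2 + \|(-\Delta)^{-s/2}g^-\|_2^2,$$
since the cross terms $-2\iint g^+(x)g^-(y)|x-y|^{-(n-2s)}\,dx\,dy$ are nonpositive. Combining this with the $L^{\frac{2n}{n+2s}}$-splitting and superadditivity of $t\mapsto t^{\frac{n+2s}{n}}$ (note $\frac{n+2s}{n}>1$), one bounds the HLS deficit of $g$ from below by a combination of the deficits of $g^+$ and $g^-$ together with a positive gain coming from the strict superadditivity — this gain is exactly where the constant $2^{\frac{n+2s}{n}}-2$ enters: if, say, $\|g^-\|_{\frac{2n}{n+2s}} \ge \|g^+\|_{\frac{2n}{n+2s}}$ is not tiny relative to $\|g^+\|_{\frac{2n}{n+2s}}$, then $\|g\|_{\frac{2n}{n+2s}}^2 = (\|g^+\|_{\frac{2n}{n+2s}}^{\frac{2n}{n+2s}} + \|g^-\|_{\frac{2n}{n+2s}}^{\frac{2n}{n+2s}})^{\frac{n+2s}{n}}$ is strictly larger than $\|g^+\|_{\frac{2n}{n+2s}}^2 + \|g^-\|_{\frac{2n}{n+2s}}^2$ by a definite amount, and this surplus dominates $\inf_{h}\|g-h\|_{\frac{2n}{n+2s}}^2$ (for which one can again take $h$ to be the near-optimizer for the larger of $g^\pm$, or simply $h=0$ if the split is balanced). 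In the opposite case, where one of $g^+, g^-$ — say $g^-$ — is negligible compared to the other, $g$ is a small $L^{\frac{2n}{n+2s}}$-perturbation of the nonnegative function $g^+$, the HLS deficit of $g$ is close to that of $g^+$, and $\inf_h \|g-h\|_{\frac{2n}{n+2s}}$ is controlled by $\inf_h\|g^+-h\|_{\frac{2n}{n+2s}} + \|g^-\|_{\frac{2n}{n+2s}}$ via the triangle inequality; applying the nonnegative bound $\mathcal{S}_{HLS}(g^+) \ge C^{pos}_{HLS}$ and tracking the perturbation yields the factor $\tfrac12$.

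The main obstacle is the bookkeeping that interpolates between these two cases and shows that the worst case over the ratio $\|g^-\|_{\frac{2n}{n+2s}}/\|g^+\|_{\frac{2n}{n+2s}} \in [0,1]$ produces precisely $\tfrac12\min\{C^{pos}_{HLS}, \min\{2^{\frac{n+2s}{n}}-2, 1\}\}$: one has to balance the ``gain from superadditivity'' (strong when the split is balanced, vanishing when it is not) against the ``loss from perturbing a nonnegative function'' (small when the split is unbalanced, large when balanced), and the minimum of the two competing lower bounds, after optimizing the triangle-inequality constant, is what forces the $\tfrac12$ and the appearance of $2^{\frac{n+2s}{n}}-2$. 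I expect this to follow by a direct but somewhat delicate one-variable estimate, entirely parallel to the corresponding step in \cite{DEFFL} for $s=1$, with the gradient identity there replaced throughout by the $L^{\frac{2n}{n+2s}}$-norm splitting and the positivity of the Riesz kernel.
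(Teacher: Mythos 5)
Your proposal is correct and follows essentially the same route as the paper: positivity of the Riesz kernel to make the deficit superadditive under the splitting $g=g_+-g_-$, the elementary bound $1-m^{\frac{n+2s}{n}}-(1-m)^{\frac{n+2s}{n}}\geq (2^{\frac{n+2s}{n}}-2)m^{\frac{n+2s}{n}}$ for $m\in[0,1/2]$ to produce the gain on the smaller part, and then $C^{pos}_{HLS}$ applied to $g_+$ combined with the triangle inequality and $a^2+b^2\geq\frac{1}{2}(a+b)^2$ to get the factor $\frac12$. The paper runs this as a single uniform chain of inequalities (your preliminary ``easy regime'' and the balanced/unbalanced case split are not needed), but the mechanism is identical.
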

\begin{proof}
Let $D(g)=\|g\|^2_{\frac{2n}{n+2s}}-S_{n,s}^{-1}\|(-\Delta)^{-s/2}f\|_2^2$ and $f_{\pm}$ denote the positive and negative parts of $f$. Then
\begin{align}\label{est of D(g)}\nonumber
& D(f)\geq \|g\|^2_{\frac{2n}{n+2s}}-S_{n,s}^{-1}\|(-\Delta)^{-s/2}(g_{+})\|_2^2-S_{n,s}^{-1}\|(-\Delta)^{-s/2}(g_{-})\|_2^2\\
& =D(g_+)+D(g_-)+\|g\|^2_{\frac{2n}{n+2s}}-\|g_+\|^2_{\frac{2n}{n+2s}}-\|g_-\|^2_{\frac{2n}{n+2s}}.
\end{align}
Without loss of generality, we may assume
$$\|g\|^2_{\frac{2n}{n+2s}}=1,~~\text{and}~~m=\|g_{-}\|^{\frac{2n}{n+2s}}_{\frac{2n}{n+2s}}\in[0,1/2].$$
Let $h(m)=1-m^{\frac{n+2s}{n}}-(1-m)^{\frac{n+2s}{n}}$ and $t(m)=1-(1-m)^{\frac{n+2s}{n}}-2(1-(1/2)^{\frac{n+2s}{n}})m$. It is easy to check $t(0)=t(1/2)=0$ and
$t^{\prime\prime}(m)\leq 0$ on $[0,1/2]$, then $t(m)\geq 0$ on $[0,1/2]$, which means $1-(1-m)^{\frac{n+2s}{n}}\geq 2(1-(1/2)^{\frac{n+2s}{n}})m$. Thus
$$h(m)\geq 2(1-(1/2)^{\frac{n+2s}{n}})m-m^{\frac{n+2s}{n}},~~~m\in[0,1/2].$$
Since the function $2(1-(1/2)^{\frac{n+2s}{n}})m^{-\frac{2s}{n}}-1$ is decreasing on $[0,1/2]$, then
\begin{align}\label{est of h(m)}
h(m)\geq (2^{\frac{n+2s}{n}}-2)m^{\frac{n+2s}{n}}.
\end{align}
By (\ref{est of h(m)}) and sharp HLS inequality we have
$$\|g\|^2_{\frac{2n}{n+2s}}-\|g_+\|^2_{\frac{2n}{n+2s}}-\|g_-\|^2_{\frac{2n}{n+2s}}\geq (2^{\frac{n+2s}{n}}-2)\|g_-\|^2_{\frac{2n}{n+2s}}\geq
(2^{\frac{n+2s}{n}}-2)S_{n,s}^{-1}\|(-\Delta)^{s/2}g_{-}\|_2^2.$$
Along with (\ref{est of D(g)}),
$$D(g)\geq D(g_+)+D(g_{-})+(2^{\frac{n+2s}{n}}-2)S_{n,s}^{-1}\|(-\Delta)^{-s/2}g_{-}\|_2^2.$$
Let $h_{+}\in M_{HLS}$ such that $\|g_{+}-h_{+}\|_{\frac{2n}{n+2s}}=\inf\limits_{h\in M_{HLS}}\|g_{+}-h\|_{\frac{2n}{n+2s}}$. Since $$D(g_-)+S_{n,s}^{-1}\|(-\Delta)^{-s/2}(g_{-})\|_2^2=\|g_-\|^2_{\frac{2n}{n+2s}},$$ thus
\begin{align*}
& D(g)\geq D(g_+)+\min\{2^{\frac{n+2s}{n}}-2,1\}\|g_{-}\|^2_{\frac{2n}{n+2s}}\\
& \geq C^{pos}_{HLS}\|g_+-h_+\|^2_{\frac{2n}{n+2s}}+\min\{2^{\frac{n+2s}{n}}-2,1\}\|g_{-}\|^2_{\frac{2n}{n+2s}}\\
& \geq \min\{C^{pos}_{HLS},\min\{2^{\frac{n+2s}{n}}-2,1\}\}\frac{\left(\|g_+-h_+\|_{\frac{2n}{n+2s}}+\|g_{-}\|_{\frac{2n}{n+2s}}\right)^2}{2}\\
& \geq \frac{1}{2}\min\{C^{pos}_{HLS},\min\{2^{\frac{n+2s}{n}}-2,1\}\}\|g-h_+\|^2_{\frac{2n}{n+2s}}\\
& \geq \frac{1}{2}\min\{C^{pos}_{HLS},\min\{2^{\frac{n+2s}{n}}-2,1\}\}\inf_{h\in M_{HLS}}\|g-h\|^2_{\frac{2n}{n+2s}}
\end{align*}
which completes the proof of Lemma \ref{constant}.
\end{proof}

Now we are in position to prove Theorem \ref{thm1}. Denote by $$K_{n,s}=\sup\limits_{0<\delta<1}\frac{\delta}{2}\frac{n-2s}{n+2s}\min\{m(2\delta)\frac{n-2s}{n+2s},1\}.$$  We have already established $\mathcal{S}_{HLS}(g)\geq  K_{n,s}$
for all $0\leq g\in L^{\frac{2n}{n+2s}}(\mathbb{R}^n)\setminus M_{HLS}$  in subsection 3.2. This together with Lemma {\ref{constant}} yields that
for all $g\in L^{\frac{2n}{n+2s}}(\mathbb{R}^n)\setminus M_{HLS}$, there holds
$$\mathcal{S}_{HLS}(g)\geq \frac{1}{2}\min\{K_{n,s},\min\{2^{\frac{n+2s}{n}}-2,1\}\}.$$ Then we finish the proof of Theorem \ref{thm1}.

\section{HLS stability implies Sobolev stability}
In this section, we will prove the stability of fractional Sobolev inequality with explicit lower bounds for all $0<s<n/2$ from the stability of Hardy-Littlewood-Sobolev inequality with explicit lower bounds.
\medskip

Let $f\in \dot{H}^s(\mathbb{R}^n)$. Define $$\mathcal{F}(f)=S_{n,s}\|(-\Delta)^{s/2}f\|_2^2,~~~~\mathcal{E}(f)=\|f\|^2_{2^{\ast}_s}.$$
Then the Legendre transform $\mathcal{F}^{\ast}$ of a convex functional $\mathcal{F}: \dot{H}^s(\mathbb{R}^n)\rightarrow [0, +\infty)$ defined on $\dot{H}^{-s}(\mathbb{R}^n)$ is given by $$\mathcal{F}^{\ast}(g)=\sup_{f\in H^s(\mathbb{R}^n)}\{2\int_{\mathbb{R}^n}f(x)g(x)dx-\mathcal{F}(f)\}.$$
A simple calculation gives $\mathcal{F}^{\ast}(g)=S_{n,s}^{-1}\|(-\Delta)^{-s/2}g\|_2^2$. Similarly, the Legendre transform $\mathcal{E}^{\ast}$ of a convex functional $\mathcal{E}: L^{2^{*}_s}(\mathbb{R}^n)\rightarrow [0, +\infty)$ defined on $L^{\frac{2n}{n+2s}}(\mathbb{R}^n)$ is given by $$\mathcal{E}^{\ast}(g)=\sup_{f\in L^{2^{*}_s}(\mathbb{R}^n)}\{2\int_{\mathbb{R}^n}f(x)g(x)dx-\mathcal{E}(f)\}.$$
Obviously, $\mathcal{E}^{\ast}(g)=\|g\|^2_{\frac{2n}{n+2s}}$. Choose $g=\|f\|_{2^\ast_s}^{2-2^\ast_s}|f|^{2^\ast_s-1}sgn(f)$ and $f_1=S_{n,s}^{-1}(-\Delta)^{-s}g$, we can check that
\begin{align}\label{equation1}
\mathcal{E}(f)+\mathcal{E}^\ast(g)=2\int_{\mathbb{R}^n}fgdx
\end{align}
and
\begin{equation}\begin{split}\label{adequation2}
&\mathcal{F}(f)\\
&\ \ =S_{n,s}\|(-\Delta)^{s/2}f_1\|_2^2+S_{n,s}\|(-\Delta)^{s/2}(f-f_1)\|_2^2+2\int_{\mathbb{R}^n}(f-f_1)(S_{n,s}(-\Delta)^{s}f_1)dx\\
&\ \ =S_{n,s}^{-1}\|(-\Delta)^{-s/2}g\|_2^2+2\int_{\mathbb{R}^n}(f-S_{n,s}^{-1}(-\Delta)^{-s}g)gdx+S_{n,s}\|(-\Delta)^{s/2}(f-f_1)\|_2^2\\
&\ \ =2\int_{\mathbb{R}^n}fgdx-\mathcal{F}^{\ast}(g)+S_{n,s}\|(-\Delta)^{s/2}(f-f_1)\|_2^2.
\end{split}\end{equation}
Combining (\ref{equation1}) with (\ref{adequation2}), we have
\begin{align}\label{equation of deficit}
\mathcal{F}(f)-\mathcal{E}(f)= \mathcal{E}^\ast(g)-\mathcal{F}^\ast(g)+S_{n,s}\|(-\Delta)^{s/2}(f-f_1)\|_2^2.
\end{align}
Since we have already proved the stability of HLS inequality in Theorem \ref{thm1}
\begin{align}\label{est of hls def}
\mathcal{E}^\ast(g)-\mathcal{F}^\ast(g)\geq \frac{1}{2}\min\{K_{n,s},\min\{2^{\frac{n+2s}{n}}-2,1\}\}\inf\limits_{h\in M_{HLS}}\|g-h\|^2_{\frac{2n}{n+2s}},
\end{align}
then for any $\epsilon>0$, there exists
a $g_0\in M_{HLS}$ such that
$$\mathcal{E}^\ast(g)-\mathcal{F}^\ast(g)\geq \frac{1}{2}\min\{K_{n,s},\min\{2^{\frac{n+2s}{n}}-2,1\}\}\|g-g_0\|^2_{\frac{2n}{n+2s}}-\varepsilon+S_{n,s}\|(-\Delta)^{s/2}(f-f_1)\|_2^2.$$
Denote by $k_{n,s}=\frac{1}{2}\min\{K_{n,s},\min\{2^{\frac{n+2s}{n}}-2,1\}\}$, by (\ref{equation of deficit}), (\ref{est of hls def}), sharp HLS inequality, $(-\Delta)^{-s}g_0\in M_s$, we derive
\begin{align*}
& \mathcal{F}(f)-\mathcal{E}(f)\geq k_{n,s}S_{n,s}^{-1}\|(-\Delta)^{-s/2}(g-g_0)\|_2^2-\varepsilon+S_{n,s}\|(-\Delta)^{s/2}(f-f_1)\|_2^2\\
& =k_{n,s}\|S_{n,s}^{-1/2}(-\Delta)^{-s/2}(g-g_0)\|_2^2-\varepsilon+\|S_{n,s}^{1/2}(-\Delta)^{s/2}f-S_{n,s}^{-1/2}(-\Delta)^{-s/2}g\|_2^2\\
& \geq \frac{k_{n,s}}{2}\|S_{n,s}^{1/2}(-\Delta)^{s/2}f-S_{n,s}^{-1/2}(-\Delta)^{-s/2}g_0\|_2^2-\varepsilon\\
& \geq \frac{S_{n,s}k_{n,s}}{2}\inf_{h\in M_s}\|(-\Delta)^{s/2}(f-h)\|_2^2-\varepsilon,
\end{align*}
which completes the proof of Theorem \ref{thm2}.

\section{The proof of Theorem \ref{thm3}}
In this section, we will apply the stability of sharp HLS inequality obtained in Theorem \ref{thm1} to derive the lower-bound of the stability of sharp restrictive Sobolev inequality on the flat sub-manifold.
\medskip

Recall the fractional Sobolev inequality on the flat sub-manifold:
\begin{equation}\label{adsubmanifold}
C_{n,s}\int_{\mathbb{R}^n}|(-\Delta)^{\frac{s}{2}}u|^2dx\geq \big(\int_{\mathbb{R}^{n-1}}|u(x',0)|^{\frac{2(n-1)}{n-2s}}dx'\big)^{\frac{n-2s}{n-1}}.
\end{equation}
Let $(-\Delta)^{\frac{s}{2}}u=f$, the above inequality can be written as
\begin{equation}\label{adtrace-Sobolev}
\int_{\mathbb{R}^n}|f|^2dx\geq C_{n,s}^{-1}\big(\int_{\mathbb{R}^{n-1}}|T(f)(x',0)|^{\frac{2(n-1)}{n-2s}}dx'\big)^{\frac{n-2s}{n-1}},
\end{equation}
where $T(f)(x',0)=c_{n,s}\int_{\mathbb{R}^{n}}\frac{f(y',y_n)}{\big(|x'-y'|^2+|y_n|^2\big)^{\frac{n-s}{2}}}dy'dy_n$.
Obviously, the dual operator $T^{*}$ of $T$ can be written as
$$T^{*}(g)(x', x_n)=c_{n,s}\int_{\mathbb{R}^{n-1}}\frac{g(y')}{\big(|x'-y'|^2+|x_n|^2\big)^{\frac{n-s}{2}}}dy'.$$
Then inequality \eqref{adtrace-Sobolev} is equivalent to the following inequality
$$\big(\int_{\mathbb{R}^{n-1}}|g|^{\frac{2(n-1)}{n-2+2s}}dx'\big)^{\frac{n-2+2s}{n-1}}\geq C_{n,s}^{-1}\int_{\mathbb{R}^{n}}|T^{*}(g)(x',x_n)|^{2}dx'dx_n,$$
which reduces to
$$\big(\int_{\mathbb{R}^{n-1}}|g|^{\frac{2(n-1)}{n-2+2s}}dx'\big)^{\frac{n-2+2s}{n-1}}\geq S_{n-1,s-\frac{1}{2}}^{-1}\int_{\mathbb{R}^{n-1}}|(-\Delta)^{-\frac{s}{2}}g|^2dx'.$$
Hence, we see that the fractional Sobolev inequality on the flat sub-manifold is in fact can be seen as the dual form of Hardy-Littlewood-Sobolev inequality in $\mathbb{R}^{n-1}$. Now, we are in the position to apply dual stability and stability of the HLS inequality to calculate the lower bound of the stability of sharp restrictive Sobolev inequality on the flat sub-manifold \eqref{adsubmanifold}.
\vskip0.1cm

For $f\in \dot{H}^s(\mathbb{R}^n)$, define $$\mathcal{F}(f)=C_{n,s}\|(-\Delta)^{s/2}f\|_2^2,~~~~\mathcal{E}(f)=\big(\int_{\mathbb{R}^{n-1}}|f(x',0)|^{\frac{2(n-1)}{n-2s}}\big)^{\frac{n-2s}{n-1}}.$$
Then the Legendre transform $\mathcal{F}^{\ast}$ of a convex functional $\mathcal{F}$ is defined on the space $\dot{H}^{-s+\frac{1}{4}}(\mathbb{R}^{n-1})$ and is given by
\begin{equation*}\begin{split}
\mathcal{F}^{\ast}(g)&=\sup_{f\in H^s(\mathbb{R}^n)}\{2\int_{\mathbb{R}^{n-1}}f(x',0)g(x')dx'-\mathcal{F}(f)\}\\
&=\sup_{f\in H^s(\mathbb{R}^n)}\{2\int_{\mathbb{R}^n}(-\Delta)^{\frac{s}{2}}(f)T^{*}(g)dx-\mathcal{F}(f)\}.
\end{split}\end{equation*}
 Similarly, the Legendre transform $\mathcal{E}^{\ast}$ of a convex functional $\mathcal{E}$ is defined on $L^{\frac{2(n-1)}{n-2+2s}}(\mathbb{R}^{n-1})$ and is given by $$\mathcal{E}^{\ast}(g)=\sup_{f(x', 0)\in L^{\frac{2(n-1)}{n-2s}}(\mathbb{R}^{n-1})}\{2\int_{\mathbb{R}^{n-1}}f(x',0)g(x')dx'-\mathcal{E}(f)\}.$$
Simple calculation leads to $$\mathcal{F}^{\ast}(g)=C_{n,s}^{-1}\int_{\mathbb{R}^n}|T^{*}(g)|^2dx,\ \ \mathcal{E}^{\ast}(g)=\big(\int_{\mathbb{R}^{n-1}}|g(x')|^{\frac{2(n-1)}{n-2+2s}}dx'\big)^{\frac{n-2+2s}{n-1}}.$$ Choose $$g(x')=\|f(x',0)\|_{L^{\frac{2(n-1)}{n-2s}}(\mathbb{R}^{n-1})}^{2-\frac{2(n-1)}{n-2s}}|f(x',0)|^{\frac{2(n-1)}{n-2s}-1}sgn(f(x',0))$$ and $f_1=C_{n,s}^{-1}(-\Delta)^{-\frac{s}{2}}(T^{*}(g))$, we can calculate that
\begin{align*}\label{equation1}
\mathcal{E}(f)+\mathcal{E}^\ast(g)=2\int_{\mathbb{R}^{n-1}}f(x',0)g(x')dx'
\end{align*}
and
\begin{align*}
\mathcal{F}(f)&=C_{n,s}\int_{\mathbb{R}^n}|(-\Delta)^{\frac{s}{2}}f|^2dx\\
&=C_{n,s}\int_{\mathbb{R}^n}|(-\Delta)^{\frac{s}{2}}f_1|^2dx+C_{n,s}\int_{\mathbb{R}^n}|(-\Delta)^{\frac{s}{2}}(f-f_1)|^2dx\\
&\ \ +2C_{n,s}\int_{\mathbb{R}^n}|(-\Delta)^{\frac{s}{2}}(f-f_1)(-\Delta)^{\frac{s}{2}}(f_1)|dx\\
&=C_{n,s}^{-1}\int_{\mathbb{R}^n}|T^{*}(g)|^2dx+2\int_{\mathbb{R}^n}(-\Delta)^{\frac{s}{2}}(f-f_1)T^{*}(g)dx+C_{n,s}\int_{\mathbb{R}^n}|(-\Delta)^{\frac{s}{2}}(f-f_1)|^2dx\\
&=C_{n,s}^{-1}\int_{\mathbb{R}^n}|T^{*}(g)|^2dx+2\int_{\mathbb{R}^{n-1}}(f-f_1)(x',0)g(x')dx'+C_{n,s}\int_{\mathbb{R}^n}|(-\Delta)^{\frac{s}{2}}(f-f_1)|^2dx\\
& =2\int_{\mathbb{R}^{n-1}}f(x',0)g(x')dx'-\mathcal{F}^{\ast}(g)+C_{n,s}\int_{\mathbb{R}^n}|(-\Delta)^{\frac{s}{2}}(f-f_1)|^2dx.
\end{align*}
Then it follows that
\begin{equation*}
\mathcal{F}(f)-\mathcal{E}(f)= \mathcal{E}^\ast(g)-\mathcal{F}^\ast(g)+C_{n,s}\int_{\mathbb{R}^n}|(-\Delta)^{\frac{s}{2}}(f-f_1)|^2dx.
\end{equation*}
Note that $$\mathcal{E}^\ast(g)-\mathcal{F}^\ast(g)=\big(\int_{\mathbb{R}^{n-1}}|g|^{\frac{2(n-1)}{n-2+2s}}dx'\big)^{\frac{n-2+2s}{n-1}}- S_{n-1,s-\frac{1}{2}}^{-1}\int_{\mathbb{R}^{n-1}}|(-\Delta)^{-\frac{s}{2}+\frac{1}{4}}g|^2dx'.$$
Since we have already proved the stability of HLS inequality in Theorem \ref{thm1}:
\begin{align*}\label{est of hls def}
\mathcal{E}^\ast(g)-\mathcal{F}^\ast(g)\geq \frac{1}{2}\min\{K_{n-1,s-\frac{1}{2}},\min\{2^{\frac{n-2+2s}{n-1}}-2,1\}\}\inf\limits_{h\in M_{HLS}}\|g-h\|^2_{\frac{2(n-1)}{n-2+2s}},
\end{align*}
then for any $\epsilon>0$, there exists
a $g_0\in M_{HLS}$ such that
$$\mathcal{E}^\ast(g)-\mathcal{F}^\ast(g)\geq \frac{1}{2}\min\{K_{n-1,s-\frac{1}{2}},\min\{2^{\frac{n-2+2s}{n-1}}-2,1\}\}\|g-g_0\|^2_{\frac{2(n-1)}{n-2+2s}}-\varepsilon+C_{n,s}\|(-\Delta)^{s/2}(f-f_1)\|_2^2.$$
Denote by $k_{n-1,s-\frac{1}{2}}=\frac{1}{2}\min\{K_{n-1,s-\frac{1}{2}},\min\{2^{\frac{n-2+2s}{n-1}}-2,1\}\}$, we can write
\begin{align*}
& \mathcal{F}(f)-\mathcal{E}(f)\geq k_{n-1,s-\frac{1}{2}}C_{n,s}^{-1}\int_{\mathbb{R}^n}|T^{*}(g-g_0)|^2dx-\varepsilon+C_{n,s}\int_{\mathbb{R}^n}|(-\Delta)^{s/2}(f-f_1)|^2dx\\
& =k_{n-1,s-\frac{1}{2}}\int_{\mathbb{R}^n}|C_{n,s}^{-1/2}T^{*}(g-g_0)|^2dx-\varepsilon+\int_{\mathbb{R}^n}|C_{n,s}^{1/2}(-\Delta)^{s/2}f-C_{n,s}^{-1/2}T^{*}(g)|^2dx\\
& \geq \frac{k_{n-1,s-\frac{1}{2}}}{2}\int_{\mathbb{R}^n}|C_{n,s}^{1/2}(-\Delta)^{s/2}f-C_{n,s}^{-1/2}T^{*}(g_0)|^2dx-\varepsilon\\
& \geq \frac{C_{n,s}k_{n-1,s-\frac{1}{2}}}{2}\inf_{h\in M_{\mathbb{R}^{n-1}}}\|(-\Delta)^{s/2}(f-h)\|_2^2-\varepsilon,
\end{align*}
where we have used the fact $(-\Delta)^{-\frac{s}{2}}(T^{*}(g_0))\in M_{\mathbb{R}^{n-1}}$. Then we accomplish the proof of Theorem \ref{thm3}.

\section{The proof of Theorem \ref{thm4}}
In this section, we will give the stability of restrictive Sobolev inequality on the sphere $\mathbb{S}^{n-1}$, namely we shall give the proof of Theorem \ref{thm4}.
\medskip

Recall the sharp restrictive Sobolev inequality on the sphere $\mathbb{S}^{n-1}$:
$$D_{n,s}\int_{\mathbb{R}^n}|(-\Delta)^{\frac{s}{2}}u|^2dx\geq \big(\int_{\mathbb{S}^{n-1}}|u(\xi)|^{\frac{2(n-1)}{n-2s}}d\sigma_{\xi}\big)^{\frac{n-2s}{n-1}}.$$

Let $(-\Delta)^{\frac{s}{2}}u=f$, the above inequality can be written as
\begin{equation}\label{trace-Sobolev}
\int_{\mathbb{R}^n}|f|^2dx\geq C_{n,s}\big(\int_{\mathbb{S}^{n-1}}|\widetilde{T}(f)(\xi)|^{\frac{2(n-1)}{n-2s}}d\sigma_{\xi}\big)^{\frac{n-2s}{n-1}},
\end{equation}
where $\widetilde{T}(f)(\xi)=c_{n,s}\int_{\mathbb{R}^{n}}\frac{f(y)}{|\xi-y|^{n-s}}dy$.
Using duality, the above inequality \eqref{trace-Sobolev} is equivalent to the following inequality
$$\big(\int_{\mathbb{S}^{n-1}}|g(\xi)|^{\frac{2(n-1)}{n-2+2s}}d\sigma_{\xi}\big)^{\frac{n-2+2s}{n-1}}\geq D_{n,s}\int_{\mathbb{R}^{n}}|\widetilde{T}^{*}(g)(x)|^{2}dx,$$
where $\widetilde{T}^{*}(g)(x)=c_{n,s}\int_{\mathbb{S}^{n-1}}\frac{g(\xi)}{|x-\xi|^{n-s}}d\sigma_{\xi}$. Now we start to establish the stability of the sharp restrictive Sobolev inequality on the sphere $\mathbb{S}^{n-1}$. Let $f\in \dot{H}^s(\mathbb{R}^n)$, define $$\mathcal{F}(f)=D_{n,s}\|(-\Delta)^{s/2}f\|_2^2,~~~~\mathcal{E}(f)=\big(\int_{\mathbb{S}^{n-1}}|f(\xi)|^{\frac{2(n-1)}{n-2s}}d\sigma_{\xi}\big)^{\frac{n-2s}{n-1}}.$$
Then the Legendre transform $\mathcal{F}^{\ast}$ of a convex functional $\mathcal{F}$ is defined on $H^{-s+\frac{1}{2}}(\mathbb{S}^{n-1})$ and is given by
\begin{equation}\begin{split}
\mathcal{F}^{\ast}(g)&=\sup_{f\in H^s(\mathbb{R}^n)}\{2\int_{\mathbb{S}^{n-1}}f(\xi)g(\xi)d\sigma_{\xi}-\mathcal{F}(f)\}\\
&=\sup_{f\in H^s(\mathbb{R}^n)}\{2\int_{\mathbb{R}^n}(-\Delta)^{\frac{s}{2}}(f)\widetilde{T}^{*}(g)dx-\mathcal{F}(f)\}
\end{split}\end{equation}
Similarly, the Legendre transform $\mathcal{E}^{\ast}$ of a convex functional is defined on $L^{\frac{2(n-1)}{n-2+2s}}(\mathbb{S}^{n-1})$ and is given by  $$\mathcal{E}^{\ast}(g)=\sup_{f\in L^{\frac{2(n-1)}{n-2s}}(\mathbb{S}^{n-1})}\{2\int_{\mathbb{S}^{n-1}}f(\xi)g(\xi)d\sigma_{\xi}-\mathcal{E}(f)\}.$$
It is not difficult to check that $\mathcal{F}^{\ast}(g)=D_{n,s}^{-1}\int_{\mathbb{R}^n}|\widetilde{T}^{*}(g)|^2dx$ and
$\mathcal{E}^{\ast}(g)=\big(\int_{\mathbb{S}^{n-1}}|g(\xi)|^{\frac{2(n-1)}{n-2+2s}}d\sigma_{\xi}\big)^{\frac{n-2+2s}{n-1}}$. Pick $$g(\xi)=\|f(\xi)\|_{L^{\frac{2(n-1)}{n-2s}}(\mathbb{S}^{n-1})}^{2-\frac{2(n-1)}{n-2s}}|f(\xi)|^{\frac{2(n-1)}{n-2s}-1}sgn(f(\xi))$$
and $f_1=D_{n,s}^{-1}(-\Delta)^{-\frac{s}{2}}(\widetilde{T}^{*}(g))$, we can write
\begin{equation*}\begin{split}\label{equation2}\nonumber
&\mathcal{F}(f)=D_{n,s}\int_{\mathbb{R}^{n}}|(-\Delta)^{s/2}f|^2dx\\
&=D_{n,s}\int_{\mathbb{R}^n}|(-\Delta)^{\frac{s}{2}}f_1|^2dx+D_{n,s}\int_{\mathbb{R}^n}|(-\Delta)^{\frac{s}{2}}(f-f_1)|^2dx\\
&\ \ +2D_{n,s}\int_{\mathbb{R}^n}|(-\Delta)^{\frac{s}{2}}(f-f_1)(-\Delta)^{\frac{s}{2}}(f_1)|dx\\
&=D_{n,s}^{-1}\int_{\mathbb{R}^n}|\widetilde{T}^{*}(g)|^2dx+2D_{n,s}\int_{\mathbb{R}^n}(-\Delta)^{\frac{s}{2}}(f-f_1)\widetilde{T}^{*}(g)dx+D_{n,s}\int_{\mathbb{R}^n}|(-\Delta)^{\frac{s}{2}}(f-f_1)|^2dx\\
&=D_{n,s}^{-1}\int_{\mathbb{R}^n}|\widetilde{T}^{*}(g)|^2dx+2\int_{\mathbb{S}^{n-1}}(f-f_1)(\xi)g(\xi)d\sigma_{\xi}+D_{n,s}\int_{\mathbb{R}^n}|(-\Delta)^{\frac{s}{2}}(f-f_1)|^2dx\\
& =2\int_{\mathbb{S}^{n-1}}f(\xi)g(\xi)d\sigma_{\xi}-\mathcal{F}^{\ast}(g)+D_{n,s}\int_{\mathbb{R}^n}|(-\Delta)^{\frac{s}{2}}(f-f_1)|^2dx.
\end{split}\end{equation*}
It is easy to check that
\begin{align}\label{addequation1}
\mathcal{E}(f)+\mathcal{E}^\ast(g)=2\int_{\mathbb{S}^{n-1}}f(\xi)g(\xi)d\sigma_{\xi}.
\end{align}
Hence it follows that
\begin{align*}\label{equation of deficit}
\mathcal{F}(f)-\mathcal{E}(f)= \mathcal{E}^\ast(g)-\mathcal{F}^\ast(g)+D_{n,s}\int_{\mathbb{R}^n}|(-\Delta)^{\frac{s}{2}}(f-f_1)|^2dx.
\end{align*}

Note that $$\mathcal{E}^\ast(g)-\mathcal{F}^\ast(g)=\big(\int_{\mathbb{S}^{n-1}}|g(\xi)|^{\frac{2(n-1)}{n-2+2s}}d\sigma_\xi\big)^{\frac{n-2+2s}{n-1}}-B_{n-1,s-\frac{1}{2}}\int_{\mathbb{S}^{n-1}}\int_{\mathbb{S}^{n-1}}g(\xi)|\xi-\eta|^{-(n-2s)}g(\eta)d\sigma_\xi d\sigma_\eta.$$
Since we have already proved the stability of HLS inequality on the sphere in Corollary \ref{coro1}, hence we derive that
\begin{equation*}
\mathcal{E}^\ast(g)-\mathcal{F}^\ast(g)\geq \frac{1}{2}\min\{K_{n-1,s-\frac{1}{2}},\min\{2^{\frac{n-2+2s}{n-1}}-2,1\}\} \inf\limits_{h\in \widetilde{M}_{HLS}} \big(\int_{\mathbb{S}^{n-1}}|g-h|^{\frac{2(n-1)}{n-2+2s}}d\sigma_{\xi}\big)^{\frac{n-2+2s}{n-1}}.
\end{equation*}
Then it follows that for any $\epsilon>0$, there exists
a $g_0\in M_{HLS}$ such that
$$\mathcal{E}^\ast(g)-\mathcal{F}^\ast(g)\geq \frac{1}{2}\min\{K_{n-1,s-\frac{1}{2}},\min\{2^{\frac{n-2+2s}{n-1}}-2,1\}\}\|g-g_0\|^2_{\frac{2(n-1)}{n-2+2s}}-\varepsilon+D_{n,s}\|(-\Delta)^{s/2}(f-f_1)\|_2^2.$$
Denote by $k_{n-1,s-\frac{1}{2}}=\frac{1}{2}\min\{K_{n-1,s-\frac{1}{2}},\min\{2^{\frac{n-2+2s}{n-1}}-2,1\}\}$, we obtain
\begin{align*}
& \mathcal{F}(f)-\mathcal{E}(f)\geq k_{n-1,s-\frac{1}{2}}D_{n,s}^{-1}\int_{\mathbb{R}^n}|\widetilde{T}^{*}(g-g_0)|^2dx-\varepsilon+D_{n,s}\int_{\mathbb{R}^n}|(-\Delta)^{s/2}(f-f_1)|^2dx\\
& =k_{n-1,s-\frac{1}{2}}\int_{\mathbb{R}^n}|D_{n,s}^{-1/2}\widetilde{T}^{*}(g-g_0)|^2dx-\varepsilon+\int_{\mathbb{R}^n}|D_{n,s}^{1/2}(-\Delta)^{s/2}f-D_{n,s}^{-1/2}\widetilde{T}^{*}(g)|^2dx\\
& \geq \frac{k_{n-1,s-\frac{1}{2}}}{2}\int_{\mathbb{R}^n}|D_{n,s}^{1/2}(-\Delta)^{s/2}f-D_{n,s}^{-1/2}\widetilde{T}^{*}(g_0)|^2dx-\varepsilon\\
& \geq \frac{D_{n,s}k_{n-1,s-\frac{1}{2}}}{2}\inf_{h\in M_{\mathbb{S}^{n-1}}}\|(-\Delta)^{s/2}(f-h)\|_2^2-\varepsilon,
\end{align*}
which completes the proof of Theorem \ref{thm4}.

\bibliographystyle{amsalpha}

\begin{thebibliography}{10}

\bibitem{Au} T. Aubin, \textit{Problemes isoperimetriques et espaces de Sobolev}. J. Differ. Geometry \textbf{11} (1976), 573--598.

\bibitem{BaWeWi} T. Bartsch, T. Weth and M. Willem, \textit{A Sobolev inequality with remainder term and critical equations on domains with topology for the polyharmonic operator}. Calc. Var. Partial
Differential Equations \textbf{18} (2003), 253--268



\bibitem{BiEg} G. Bianchi and H. Egnell, \textit{A note on the Sobolev inequality}. J. Funct. Anal. \textbf{100} (1991), no. 1, 18--24.

\bibitem{Be} W. Beckner, {\it Weighted inequalities and Stein-Weiss potentials}. Forum Math. \textbf{20} (2008), 587--606.
\bibitem{Beckner1} W. Beckner, {\it Functionals for multilinear fractional embedding}. Acta Mathematica Sinica, English Series \textbf{31} (2015), 1-28.


\bibitem{BrLi} H. Brezis and E. Lieb, \textit{Sobolev inequalities with remainder terms}. J. Funct. Anal. \textbf{62} (1985),73--86.

\bibitem{Br1} F. Brock, \textit{Continuous Steiner-symmetrization}. Math. Nachr., 172(1995), 25--48,

\bibitem{Br2} F. Brock, \textit{Continuous rearrangement and symmetry of solutions of elliptic problems}. Proc. Indian
Acad. Sci. Math. Sci., 110(2000), no.~2, 157--204.

\bibitem{Car} E. Carlen, \textit{Duality and stability for functional inequalities}. Ann. Fac. Sci. Toulouse Math. (6)26(2017), no.~2, 319--350.

\bibitem{CaLo} E. Carlen and M. Loss, \textit{Extremals of functionals with competing symmetries}. J. Func. Anal., 88(1990), no.~2, 437--456.

\bibitem{CCL} A. E. Carlen, J. A. Carrillo and M. Loss, {\it Hardy-Littlewood-Sobolev inequalities via fast
diffusion flows}, Proc. Natl. Acad. Sci., \textbf{107} (2010), 19696-19701.

\bibitem{ChFrWe} S. Chen, R. Frank and T. Weth, \textit{Remainder terms in the fractional Sobolev inequality}. Indiana Univ. Math. J. \textbf{62} (2013), no. 4, 1381--1397.

\bibitem{CLT} L. Chen, G. Lu and H. Tang, {\it
Sharp Stability of Log-Sobolev and Moser-Onofri inequalities on the Sphere}, arXiv:2210.06727, J. Funct. Anal. (2023), https://doi.org/10.1016/j.jfa.2023.110022.

\bibitem{CLT2} L. Chen, G. Lu and H. Tang, {\it Optimal asymptotic lower bound for stability of fractional Sobolev inequality and the global stability of log-Sobolev inequality on the sphere}, arXiv:2312.11787.
    
\bibitem{CLT1} L. Chen, G. Lu and C. Tao, {\it Existence of extremal functions for the Stein-Weiss inequalities on the Heisenberg group}, J. Funct. Anal., \textbf{277} (2019), 1112--1138.

  
 
    
\bibitem{DEFFL} J. Dolbeault, M. Esteban, A. Figalli, R. Frank and M. Loss, \textit{Sharp stability for Sobolev and log-Sobolev inequalities, with optimal dimensional dependence}. arXiv:2209.08651, 2022.

\bibitem{DK} N. De Nitti and T. K\"{o}nig, \textit{Critical functions and blow-up asymptotics for the fractional Brezis--Nirenberg problem in low dimension}, arXiv:2111.13417, 2022.

\bibitem{Es} P. Esposito, \textit{On some conjectures proposed by Ha\"{\i}m Bresiz}. Nonlinear Anal., 56(2004), no,~5, 751--759.

\bibitem{Escobar} J. Escobar, {\it Sharp constant in a Sobolev trace inequality}, Indiana Univ. Math. J., {\bf 37} (1988), 687--698.

\bibitem{FL0} R. L. Frank and E. H. Lieb, {\it Sharp constants in several inequalities on the Heisenberg group}. Ann. of Math. (2) 176 (2012), no. 1, 349-381.

\bibitem{FL1} R. L. Frank and E. H. Lieb, {\it Inversion positivity and the sharp Hardy-Littlewood-Sobolev inequality},
Calc. Var. Partial Differential Equations, \textbf{39} (2010), 85--99.

\bibitem{FL2} R. L. Frank and E. H. Lieb, {\it A new rearrangement-free proof of the sharp Hardy-Littlewood-Sobolev
inequality}, Spectral Theory, Function spaces and Inequalities (B. M. E. A Brown, ed.), Oper. Theory
Adv. Appl. 219 Birkhauser, Basel, (2012), 55--67.

\bibitem{Han0} X. Han, {\it Existence of maximizers for Hardy-Littlewood-Sobolev inequalities on the Heisenberg group}, Indiana Univ. Math. J., \textbf{62}  (2013), 737--751.

 \bibitem{HLZ} X. Han, G. Lu and J. Zhu,  {\it Hardy-Littlewood-Sobolev and Stein-Weiss inequalities and integral systems on the Heisenberg group}. Nonlinear Anal. 75 (2012), no. 11, 4296-4314.

\bibitem{HL} G. H. Hardy and J. E. Littlewood, {\it Some properties of fractional integrals}, Math. Z., \textbf{27} (1928), 565--606.

\bibitem{Lieb} E. H. Lieb, {\it Sharp constants in the Hardy-Littlewood-Sobolev and related inequalities}, Ann. Math., \textbf{118}
(1983), 349--374.

\bibitem{Ko} T. K\"{o}nig, \textit{On the sharp constant in the Bianchi-Engell stability inequality}, arXiv:2210.08482.

\bibitem{LuWe} G. Lu and J. Wei, \textit{ On a Sobolev inequality with remainder terms}. Proc. Amer. Math. Soc. \textbf{128}
(1999), 75--84.

\bibitem{LiebLoss} E. H. Lieb and  M. Loss, {\it Analysis}, 2nd ed. Graduate studies in Mathematics 14, Providence, RI:
American Mathematical Sociery, 2001.

\bibitem{Nazaret} B. Nazaret, {\it Best constant in Sobolev trace inequalities on the half-space}, Nonlinear Anal., \textbf{65}, (2006), 1977-1985.

\bibitem{Re} O. Rey, \textit{The role of the Green's function in a nonlinear elliptic equation involving the critical Sobolev exponent}, J. Func. Anal., \textbf{89} (1990), 1--52.

\bibitem{S} S. L. Sobolev, {\it On a theorem in functional analysis (in Russian)}, Mat. Sb, \textbf{4} (1938), 471--497.

\bibitem{SteinWeiss} E. Stein and G. Weiss, {\it  Fractional integrals on $n$-dimensional Euclidean space}. J. Math. Mech. 7 1958 503-514.

\bibitem{Ta} G. Talenti, {\it Best constants in Sobolev inequality}. Ann. Mat. Pura Appl.,  \textbf{110} (1976), 353--372.


\end{thebibliography}

\end{document}